\newtheorem{theorem}{Theorem}
\newtheorem{remark}{Remark}
\newtheorem{proposition}{Proposition}
\newtheorem{corollary}{Corollary}
\newtheorem{lemma}{Lemma}
\newtheorem*{lemma:associativity}{Lemma \ref{le:boundvarphi}}
\newcommand{\R}{{\mathbb{R}}}
\newcommand{\N}{{\mathbb{N}}}
\newcommand{\cH}{{\mathcal{H}}}
\newcommand{\cV}{{\mathcal{V}}}
\title{ Qualitative Analysis of certain Reaction-Diffusion Systems of the FitzHugh-Nagumo type}
\begin{document}
\author{B. Ambrosio}
\address{Normandie Univ, UNIHAVRE, LMAH, FR-CNRS-3335, ISCN, 76600 Le Havre, France\\
The Hudson School of Mathematics, NYC}
\email{benjamin.ambrosio@univ-lehavre.fr}
\subjclass[2010]{...}

\begin{abstract}
This article aims to provide insights into the qualitative analysis of some
nonlinear Reaction-Diffusion (RD) systems arising in Neuroscience. We first
introduce a non-homogeneous FitzHugh-Nagumo (nhFHN) featuring excitability
and oscillatory properties. Then, we discuss the qualitative analysis of a toy model
related to nhFHN. In particular, we focus on the convergence of solutions of the toy
model toward different solutions (fixed point, periodic) and show the existence of a
cascade of Hopf bifurcations. Finally, we connect this analysis to the nhFHN system.
\end{abstract}

\keywords{
Hopf bifurcation, Reaction-Diffusion, FizHugh-Nagumo, Liouville equation, LaSalle's Principle}

\maketitle
\section{Introduction}

This article is motivated by the qualitative analysis of the following RD system

\begin{equation}
 \left \{ 
     \begin{array}{rcl}
      \epsilon u_t&=& f(u)-v +d u_{xx}, \,(x,t)\in (a,b)\times (0,+\infty)\\
      v_t&=& u -c(x)\\
     \end{array}
      \right. 
      \label{FHNRD}
\end{equation}
with $f(u)=-u^3+3u$, $\epsilon$ small, and Neumann Boundary conditions (NBC). In the Neuroscience context, $u$ represents a potential and $v$ a recovery variable while $\Omega=(a,b)$ relates to the  spatial domain, typically the axon  of one single neuron. This system provides a simple model generating excitable and oscillatory behavior.  Thanks to the non-homogeneity in $c(x)$, it can feature both properties in one single PDE, see \cites{Amb-2009,Amb-2009-thesis,Amb-2017}. This is well suited for Neuroscience applications where it is relevant to characterize the response of an excitable tissue as the stimulus intensity or frequency varies. It also raises deep mathematical questions. Before delving into more details, let us recall some contextual facts which motivate the introduction of this non-homogeneous model. In 1952, see \cite{HH}, Hodgkin and  Huxley (HH) provided their seminal model, describing the electrical pulse propagation along the giant squid neuron axon. They were awarded with the 1963 prize in Physiology and Medicine (together with J. Eccles) for their work which included both a modeling part and physiological experiments. Of note, their original article contained a four ODE model as well as a spatially extended RD system (HH model) based on the cable analogy. We refer, for example to \cites{BookDay2001,BookErm2010,BookIzh2006} for further details on the model. Later, in 1961, see \cite{Fit-1961}, inspired by the HH model, R. FitzHugh introduced a two-dimensional ODE to provide a simpler model that can reproduce excitable and oscillatory behavior. He started from the Van der pol model (VDP), see \cite{VDP26}, known for its oscillatory behavior. On the other hand, in \cite{Bon-1948}, K.F. Bonhoeffer described the dynamics of a two-dimensional ODE system featuring oscillatory or excitable properties. Following the ideas of Bonhoeffer, Fitzhugh modified the VDP model to obtain a 2d equation able to exhibit the oscillatory and excitable dynamics observed in the HH model. He named it the BonHoeffer Van der Pol model. One year later, Nagumo \textit{et al.} provided an analog electrical circuit in \cite{Nagumo}. The model is now widely known as the FitzHugh-Nagumo model. With these ideas in mind, a simple model which can provide excitable and oscillatory properties is the diffusion-less underlying ODE system of \eqref{FHNRD} with a constant $c$. Indeed, it is well known that if $|c|<1$, the unique stationary point is unstable and that there exists a unique limit cycle that attracts all the non-constant trajectories. At $|c|=1$,  a supercritical Hopf bifurcation occurs, and for $|c|\geq 1$ the stationary point is globally asymptotically stable. Furthermore, in the latter case, the system is excitable because choosing an initial condition below a certain threshold will induce a large excursion in the phase plane, which is akin to a pulse for the first variable. Our motivation in studying the diffusive system \eqref{FHNRD}  is to investigate the qualitative behavior when $c(x)$ is set within a range of values giving raise to excitable and oscillatory dynamics. For example, will the signal propagate if some cells are oscillatory at the center of the domain and anywhere else the cells are excitable? This question, which relates to an interplay between excitability and oscillatory properties, has been partially  addressed in a series of papers, see \cites{Amb-2016,Amb-2009,Amb-2014}. This typically leads to a bifurcation path from a stationary state to the propagation of oscillations when peripheral cell's excitability increases. A complementary question is the characterization of the basin of attraction of patterns, see \cite{Amb-2016}. Studying the propagation of oscillations initiated at the center throughout the domain relates to the theory of traveling waves. While our approach here is quite different, it is worth mentioning some contributions of this theory that applies to the FHN equations.  Since the first studies on the Fisher-KPP equation \cites{Fis1937,Kol1937,McK1975}, the topic has indeed aroused considerable interest, see for example \cite{BookVolpert1994}. The particular case of the wave propagation phenomenon in the diffusive FHN, with $\Omega=\R$ has also been intensively studied for a few decades. One of the first contributions can be found in \cites{Rin1973,Rin1975}. In these articles, J. Rinzel  and coauthors, following the ideas in McKean \cite{McK1970}  studied a FHN RD system with piecewise linear nonlinearity (instead of the cubic nonlinearity). They provided explicit computations of periodic solutions, pulses, propagation speeds, and stability results. The case where the underlying ODE is bistable was also considered later in \cite{Rin1982a}. Around the same period, in a series of four articles, J.W. Evans provided a more theoretical analysis of a general model of nerve conduction, see  \cites{Eva1971,Eva1972a,Eva1972b,Eva1975}. In \cite{Eva1972a}, he defines the so-called Evans function, which would become an essential tool for traveling wave stability analysis see \cites{San2002,Bar2018}. In 1978, Rauch and Smoller, see \cite{Rau1978}, provided an analysis of FHN with $\Omega=\R$ which includes the well-posedness of the problem in various Banach spaces, as well as the stability of the stationary solution $(0,0)$ with various techniques (linearization, contracting rectangles...). A few years later, see \cite{Jon1984}, C. Jones, relying on the papers of Evans, provided a detailed analysis focusing on the stability of the traveling waves of the FHN diffusive equation.   Since then, many studies have been devoted to the characterization of different properties of traveling waves. This includes the proof of the existence of pulses of monotonic and periodic tales and the construction of solutions in a slow-fast context thanks to singular perturbation theory or via asymptotic expansions. Relevant articles to these matters are for example: \cites{Car1977,Has1976,Bez2017,Car2015,Car2016,Car2018,Cor2018,Erm1996,Jon1991,Kal1993,Guc2009,Guc2010,Den1991,Cha2007,Kru1997}. Those ideas and techniques are relevant to the present work, but our approach here is somewhat different. First, we work on a bounded interval subjected to NBC and not in $\R$. Moreover our first motivation is to understand the dynamics that can emerge from the inhomogeneity in $c$. This relates by many aspects to the simpler case treated in \cite{Amb-2014} where two coupled ODE FHN systems with different values of $c$ were studied. There is in fact several approaches to track the complex oscillations arising in nhFHN. The major contribution of the present article is to consider a toy model from which interesting oscillatory can be proved rigorously by projection in the eigenfunctions and to emphasize the analogy with nhFHN. Before to proceed further, it is also necessary to mention classical articles which consider RD systems related to \eqref{FHNRD} in a bounded domain and NBC. Those include for example \cites{Marion,BookTemam,BookRobinson} and references therein cited. One must remark however, that in comparison to the systems considered there, system \eqref{FHNRD} has two characteristics that, combined, bring technical difficulties:
\begin{enumerate}
\item the system is partially dissipative in the sense that only the first equation diffuses
\item there is no dissipative term of the form $-\delta v$ in the second equation 
\end{enumerate}

As mentioned before, we are particularly interested in bifurcation phenomena from which periodic solutions arise as well as in the stability of these solutions. To this end, we will first introduce a toy model which allows proving interesting results with explicit computations. Then, we will emphasize how this relates to system \eqref{FHNRD}. Technically, for \eqref{FHNRD}, due to the nonhomogeneity arising with $c(x)$, the spectrum analysis, which relies on the eigenvalues of $u_{xx}$ for the toy model, is replaced by a Sturm-Liouville analysis. The remaining of the article is divided as follows: we analyze the toy model in section two; section three is devoted to the analysis of \eqref{FHNRD}. The concluding remarks follow in section four.

\textbf{Notations and General Framework}\\

A classical approach is to work in the $L^2$ setting, see \cites{Amb-2009-thesis,BookRobinson,BookTemam,BookJLL,BookRothe}.
Accordingly, we set the following notations:

 \[\cH=L^2(a,b)\times L^2(a,b)\]
\[\cV=H^1(a,b)\times H^1(a,b) \mbox{ where } H^1(a,b) \mbox{ denotes the usual Sobolev space.}\]
\[||\cdot|| \mbox{ denotes the usual norm on } \cH.\]
 Subscripts $t$ and $x$ denote time and space derivatives, respectively. 
We will not write $(a,b)$ for simplicity if there is no ambiguity. 
For the reader's convenience, we recall a result for the wellposedness of \eqref{FHNRD}. We omit the proof which relies on the Galerkin method; see, for example, \cites{Amb-2009-thesis,BookRobinson,BookTemam,BookJLL}.
\begin{theorem}
 For initial conditions (IC)  $(u,v)(x,0)$ given in $\cH$, there exists a unique solution $(u,v)(x,t)$
 satisfying

\[(u, v)\in C(\R^+,\cH)\]
\[\forall \mu>0,\, \forall T>0,\,\forall t \in (\mu,t) u(t) \in L^\infty(a,b) \]
\[u\in L^2((0,T),H^1)\cap L^4((a,b)\times (0,T))\]
For every fixed $t$, the mapping between IC and the solution at time $t$ is continuous on $\cH$.\\
\end{theorem}
There results are valid for all the systems considered in this paper. 
\section{Analysis of a toy model}
In this section, we focus on the qualitative analysis of the following system:
\begin{equation}
\label{eq:NluvhopfRD}
 \left \{ 
     \begin{array}{rcl}
      u_t&=& \alpha u-u^3-v+u_{xx} \\
      v_t&=& u 
     \end{array}
      \right.     
\end{equation}

on the domain $(0,1)$ with NBC. System \eqref{eq:NluvhopfRD} provides a toy example allowing a cascade of Hopf-bifurcations. It is a simple RD system for which interesting analytical results can be obtained. Note that for $\epsilon=d=1$, and $c$ constant, system \eqref{eq:NluvhopfRD} results from \eqref{FHNRD} after a change of variables and taking out the square term in $u$.   Here, we focus on asymptotic behavior. We are particularly interested in the co-existence of solutions, toward which convergence
depends on initial conditions (IC). To this end, we will first give a detailed analysis of the linearized system of \eqref{eq:NluvhopfRD} around $(0,0)$. While the analysis might look as straightforward computations, it emphasizes the qualitative dynamics within the ODEs associated with the projection on eigensubspaces and details valid proofs of stability
even though zero belongs to the closure of the set of eigenvalues. After that, we provide an analysis of \eqref{eq:NluvhopfRD}. We prove that a global stability result persists for  $\alpha<0$.  For $\alpha>0$, small, while the $(0,0)$ becomes unstable, we explicit linear subspaces of $\cH$, for which the solution still evolves towards $0$. We also provide a more general local stability result.  Finally, we illustrate some of the dymanics with numerical simulations.\\

\subsection{The linear case}
Note first that $(0,0)$ is a constant solution of \eqref{eq:NluvhopfRD}. The linearized system around this point is given by: 

\begin{center}
\begin{equation}
\label{eq:luvhopfRD}
 \left \{ 
     \begin{array}{rcl}
      u_t&=& \alpha u-v+u_{xx} \\
      v_t&=& u 
     \end{array}
      \right. 
\end{equation}
\end{center}
on the domain $(0,1)$ with NBC. The spectral decomposition allows to provide a comprehensive analysis of \eqref{eq:luvhopfRD}. Classically, we set:
\[\varphi_0(x)=1, \mbox{ and }\forall k \in \N^* \, \varphi_k(x)=\sqrt{2}\cos(k \pi x).\] 
We recall that the family $(\varphi_k)_{k  \in \N}$ is an orthonormal basis of $L^2$, and that the functions $\varphi_k$ satisfy:
\[-(\varphi_k)_{xx}=\lambda_k \varphi_k\]
and
\[(\varphi_k)_{x}(0)=(\varphi_k)_{x}(1)=0,\]
with 
\[\lambda_k=k^2\pi^2.\]
Looking for solutions expressed as,
\[u(t)=\sum_{k=0}^{\infty}u_k(t)\varphi_k,\, v(t)=\sum_{k=0}^{\infty}v_k(t)\varphi_k\]
leads by projection on the eigenspace generated by $(\varphi_k,\varphi_k)$ to the resolution of the two-dimensional ODE $E_k$: 
 \begin{center}
\begin{equation}
\label{eq:luvhopfRD-E_k}
 (E_k)\left \{ 
     \begin{array}{rcl}
      u_{kt}&=& (\alpha_k-\lambda_k)u_k-v_k\\ 
      v_{kt}&=& u_k
     \end{array}
      \right. 
\end{equation}
\end{center}
The eigenvalues of the matrix
\[A_k=\begin{pmatrix}
\alpha-\lambda_k &-1\\
1&0
\end{pmatrix}\]
are 
\[\sigma_k^1=\frac{1}{2}\bigg(\alpha-\lambda_k-\sqrt{(\alpha-\lambda_k)^2-4}\bigg),\, \sigma_k^2=\frac{1}{2}\bigg(\alpha-\lambda_k+\sqrt{(\alpha-\lambda_k)^2-4}\bigg).\]
The following proposition summarizes the properties of $\sigma_k^1$ and $\sigma_k^2$.
\begin{proposition}
When $\alpha$  crosses $\lambda_k$ from left to right, $\sigma_k^1$ and  $\sigma_k^2$ cross the imaginary axis from left to right. Furthermore,
\[\lim_{k\rightarrow +\infty}\sigma_k^1=-\infty \mbox{ and } \lim_{k\rightarrow +\infty}\sigma_k^2=0^-, \]
with
\[\sigma_k^2=\frac{1}{\alpha-\lambda_k}+o(\frac{1}{\alpha-\lambda_k}).\]
\end{proposition}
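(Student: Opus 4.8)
The plan is to analyze the two eigenvalue branches $\sigma_k^1$ and $\sigma_k^2$ directly from their closed-form expressions in terms of the quantity $\delta_k := \alpha - \lambda_k$, treating separately the regime where the discriminant $\delta_k^2 - 4$ is negative (complex-conjugate eigenvalues) and where it is nonnegative (real eigenvalues). Since $\lambda_k = k^2\pi^2$ is strictly increasing with $\lambda_0 = 0$, for any fixed $\alpha$ only finitely many indices $k$ satisfy $\lambda_k < \alpha$, and in particular $\delta_k \to -\infty$ as $k \to \infty$; this is the structural fact driving both claims.

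For the first claim, fix $k$ and vary $\alpha$ near $\lambda_k$, so that $\delta_k$ is near $0$ and $\delta_k^2 - 4 < 0$. Then $\sigma_k^{1,2} = \tfrac12\delta_k \pm \tfrac{i}{2}\sqrt{4-\delta_k^2}$, so $\operatorname{Re}\sigma_k^1 = \operatorname{Re}\sigma_k^2 = \tfrac12\delta_k = \tfrac12(\alpha-\lambda_k)$. This is a strictly increasing function of $\alpha$ that vanishes exactly at $\alpha = \lambda_k$, so as $\alpha$ crosses $\lambda_k$ from left to right both eigenvalues cross the imaginary axis from left to right; moreover at $\alpha = \lambda_k$ the eigenvalues are $\pm i$, nonzero, which is the transversality/nonresonance input one wants for a Hopf bifurcation. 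I would also note the crossing speed $\frac{d}{d\alpha}\operatorname{Re}\sigma_k^{1,2} = \tfrac12 \neq 0$.

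For the limits as $k \to \infty$, $\delta_k \to -\infty$, so eventually $\delta_k^2 - 4 > 0$ and both eigenvalues are real and negative (trace $\delta_k < 0$, determinant $=1>0$). Write $\sqrt{\delta_k^2-4} = |\delta_k|\sqrt{1 - 4/\delta_k^2}$. For $\sigma_k^1 = \tfrac12(\delta_k - \sqrt{\delta_k^2-4}) = \tfrac12(\delta_k + |\delta_k|\sqrt{1-4/\delta_k^2})$ — careful with signs: since $\delta_k < 0$, this equals $\tfrac12\delta_k(1 - \sqrt{1-4/\delta_k^2}) \to -\infty$, because $\delta_k \to -\infty$ while the bracket stays bounded below away from $0$... actually the bracket $\to 0$, so I should instead use $\sigma_k^1 \sigma_k^2 = \det A_k = 1$, i.e. $\sigma_k^1 = 1/\sigma_k^2$; combined with $\sigma_k^1 + \sigma_k^2 = \delta_k \to -\infty$ and both negative, the more negative root $\sigma_k^1 \to -\infty$ and hence $\sigma_k^2 = 1/\sigma_k^1 \to 0^-$. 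This product trick is cleaner than expanding the square root and is the step I would actually write out.

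The only mildly delicate point — the ``main obstacle,'' though it is minor — is bookkeeping the sign of $\sqrt{\delta_k^2-4}$ and correctly matching which branch ($\sigma_k^1$ vs.\ $\sigma_k^2$) goes to $-\infty$ and which to $0^-$; the labeling in the definitions makes $\sigma_k^1$ the smaller (more negative) root once the eigenvalues are real, so it is $\sigma_k^1 \to -\infty$ and $\sigma_k^2 \to 0^-$, consistent with the statement. Everything else is elementary algebra and monotonicity, so I would keep the write-up short: state the two regimes, do the real-part computation for the crossing, invoke $\det A_k = 1$ with the trace going to $-\infty$ for the limits, and remark on transversality for later use in establishing the Hopf bifurcations.
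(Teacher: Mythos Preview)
The paper states this proposition without proof, treating it as an elementary consequence of the explicit formulas for $\sigma_k^1,\sigma_k^2$; your argument is correct and supplies the missing details. The real-part computation $\operatorname{Re}\sigma_k^{1,2}=\tfrac12(\alpha-\lambda_k)$ in the complex regime is exactly the right observation for the crossing claim, and your trace--determinant trick ($\sigma_k^1\sigma_k^2=1$, $\sigma_k^1+\sigma_k^2=\alpha-\lambda_k\to-\infty$) is a clean way to get the two limits without fighting the sign of the square root --- indeed your aborted direct expansion contains a sign slip (you wrote $-\sqrt{\delta_k^2-4}=+|\delta_k|\sqrt{1-4/\delta_k^2}$, which flips the branches), so dropping it in favor of the Vieta argument is the right call for the write-up.
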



In the two next theorems, we state the main results describing the behavior of \eqref{eq:luvhopfRD}.
\begin{theorem}
\label{th:luvhopfRDneg}

 For $\alpha<0$, for any initial condition $(u(\cdot,0),v(\cdot,0))$ in $\cH$, we have 
 \[\lim_{t \rightarrow +\infty}||(u,v)(t)||= 0.\]
\end{theorem}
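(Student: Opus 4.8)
The plan is to exploit the spectral decomposition already set up. Writing $u(t)=\sum_{k\ge 0}u_k(t)\varphi_k$ and $v(t)=\sum_{k\ge 0}v_k(t)\varphi_k$, Parseval gives $\|(u,v)(t)\|^2=\sum_{k\ge 0}\big(u_k(t)^2+v_k(t)^2\big)$, and each pair $(u_k,v_k)$ solves the finite-dimensional system $(E_k)$ with matrix $A_k$. Since $\alpha<0$ we have $\alpha-\lambda_k<0$ for every $k\ge 0$ (with equality never reached, as $\lambda_0=0$), so $\mathrm{tr}\,A_k=\alpha-\lambda_k<0$ and $\det A_k=1>0$; hence both eigenvalues $\sigma_k^1,\sigma_k^2$ have strictly negative real part and $(u_k,v_k)(t)\to 0$ as $t\to+\infty$ for every fixed $k$.

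The difficulty is that this decay is not uniform in $k$: by the Proposition, $\sigma_k^2\to 0^-$, so one cannot bound $\|e^{tA_k}\|$ by a single exponential $e^{-\delta t}$ with $\delta>0$ independent of $k$, and the modes cannot be summed naively. The observation that bypasses this is a per-mode energy identity: multiplying the first line of $(E_k)$ by $u_k$, the second by $v_k$, and adding yields
\[
\tfrac{1}{2}\tfrac{d}{dt}\big(u_k(t)^2+v_k(t)^2\big)=(\alpha-\lambda_k)\,u_k(t)^2\le 0,
\]
so each quantity $u_k(t)^2+v_k(t)^2$ is non-increasing in $t$. This is the modal counterpart of the energy estimate $\tfrac{1}{2}\tfrac{d}{dt}\|(u,v)\|^2=\alpha\|u\|^2-\|u_x\|^2\le 0$ obtained by testing \eqref{eq:luvhopfRD} directly against $(u,v)$, which already gives boundedness of the trajectory in $\cH$.

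With this in hand the conclusion follows from a standard low-mode/high-mode splitting. Given $\varepsilon>0$, pick $N$ so that $\sum_{k>N}\big(u_k(0)^2+v_k(0)^2\big)<\varepsilon/2$; by monotonicity of each modal energy the tail $\sum_{k>N}\big(u_k(t)^2+v_k(t)^2\big)$ remains below $\varepsilon/2$ for all $t\ge 0$. The remaining finite sum $\sum_{k\le N}\big(u_k(t)^2+v_k(t)^2\big)$ is a finite combination of terms each tending to $0$, hence is $<\varepsilon/2$ for $t$ large enough. Adding the two estimates gives $\|(u,v)(t)\|^2<\varepsilon$ for all large $t$, which is the claim. (Alternatively, since the trajectory is bounded in $\cH$ and regularizes into $\cV$, one could invoke LaSalle's invariance principle with the functional $\|(u,v)\|^2$, whose derivative vanishes only where $u\equiv 0$, $u_x\equiv 0$, forcing $v\equiv 0$ on the largest invariant set.)

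I expect the only point needing a little care to be the bookkeeping that the $\cH$-norm is genuinely the $\ell^2$-sum of the modal energies and that term-by-term monotonicity transfers to the sum — this is immediate from Parseval and the fact that \eqref{eq:luvhopfRD} decouples mode by mode on the orthonormal basis $(\varphi_k)$ — so there is no serious obstacle; the argument is essentially elementary once the per-mode Lyapunov identity is noted.
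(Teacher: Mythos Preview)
Your argument is correct and essentially identical to the paper's own proof: the same spectral decomposition, the same observation that $\sigma_k^2\to 0^-$ prevents a uniform exponential rate, the same per-mode energy identity $\tfrac{d}{dt}(u_k^2+v_k^2)=2(\alpha-\lambda_k)u_k^2\le 0$ to control the tail, and the same low-mode/high-mode $\varepsilon/2$ splitting. The only cosmetic difference is that the paper phrases the low-mode part as a common exponential decay $\sum_{k\le N}(u_k^2+v_k^2)\le e^{-\delta t}\sum_{k\le N}(u_k(0)^2+v_k(0)^2)$, whereas you simply note each of finitely many terms tends to zero; both are adequate.
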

\begin{proof}
For any fixed $N>0$, one can prove that there exists $\delta>0$ such that
\[\sum_{k=0}^N(|u_k(t)|^2+|v_k(t)|^2)\leq e^{-\delta t}\sum_{k=0}^N(|u_k(0)|^2+|v_k(0)|^2),\]
where $(u_k,v_k)(t)$ is the solution of $E_k$ with 
\[u_k(0)=\int_0^1u(x,0)\varphi_k(x)dx,\,v_k(0)=\int_0^1v(x,0)\varphi_k(x)dx .\]
Note that since $\sigma_k^2 \rightarrow 0$, our computations do not allow us to take $N= \infty$ and conserve the exponential decay. However, for any $\epsilon>0$ there exists $N$ large enough such that 
\[\sum_{k=N+1}^{+\infty}(|u_k(0)|^2+|v_k(0)|^2)<\frac{\epsilon}{2}.\]
Since for $N$ large enough and $k>N$ we have,
\[\frac{d}{dt}(|u_k(t)|^2+|v_k(t)|^2)\leq 2(\alpha-\lambda_k)|u_k(t)|^2\leq 0,\]
the following inequality holds:
\[\forall t>0, \, \sum_{k=N+1}^{+\infty}(|u_k(t)|^2+|v_k(t)|^2)<\frac{\epsilon}{2}.\]  
Combining the above results, we can deduce that for any $\epsilon>0$, there exists $T$ such that for $t>T$, 
\[||(u,v)(t)||<\epsilon.\]
\end{proof}
\begin{theorem}
\label{th:luvhopfRD}
Let $k\in \N^*$.\\
 For $\alpha=\lambda_k$, $(0,0)$ is a center for system $E_k$, a source for $E_l$ if $l<k$ and a sink for $E_l$ if $l>k$. Furthermore, if $u_l(0)=v_l(0)=0$ for $l\in \{0,...,k-1\}$ then 
 \[\lim_{t \rightarrow +\infty}||(u,v)(t)-\varphi_k(u_k(t),v_k(t))||= 0.\]
  Otherwise, 
  \[\lim_{t \rightarrow +\infty}| |(u,v)(t)||=+\infty. \]
For $ \lambda_k<\alpha<\lambda_{k+1}$, $(0,0)$ is a source for $E_l$ si $l\leq k$ and a sink for $E_l$ if $l>k$. Furthermore, if $u_l(0)=v_l(0)=0$ for $l\in \{1,...,k\}$ then 
\[\lim_{t \rightarrow +\infty}||(u,v)(t)||= 0.\]  Otherwise  
\[\lim_{t \rightarrow +\infty}||(u,v)(t)||=+\infty. \]
\end{theorem}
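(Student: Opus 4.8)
The plan is to reuse the spectral set-up of \eqref{eq:luvhopfRD} and argue mode by mode, exactly as in the proof of Theorem \ref{th:luvhopfRDneg}. For $(u,v)(0)\in\cH$ write $(u,v)(t)=\sum_{l\ge 0}(u_l(t),v_l(t))\varphi_l$ with $(u_l,v_l)$ the solution of $E_l$; Parseval gives $\|(u,v)(t)\|^2=\sum_{l\ge0}e_l(t)$ where $e_l(t):=|u_l(t)|^2+|v_l(t)|^2$. Since $\det A_l=1>0$ and $\operatorname{tr}A_l=\alpha-\lambda_l$, the sign of $\alpha-\lambda_l$ decides whether $E_l$ has $(0,0)$ as a source ($\alpha-\lambda_l>0$), a center ($\alpha-\lambda_l=0$) or a sink ($\alpha-\lambda_l<0$); this already yields the classification statements of the theorem. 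The computation driving everything is the energy identity $\frac{d}{dt}e_l(t)=2(\alpha-\lambda_l)|u_l(t)|^2$, obtained as in Theorem \ref{th:luvhopfRDneg}.

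For a \emph{sink} mode ($\alpha<\lambda_l$) both eigenvalues of $A_l$ have negative real part, so $e_l(t)\to0$, and the energy identity shows $e_l$ is non-increasing. The contribution of all sink modes to $\|(u,v)(t)\|^2$ then tends to $0$ by the tail argument of Theorem \ref{th:luvhopfRDneg}: fix $\epsilon>0$, choose $N$ with $\sum_{l>N}e_l(0)<\epsilon/2$ (possible since the data lies in $\cH$), note this persists for all $t$ by monotonicity, and let $t\to\infty$ in the finite remaining block of sink indices. For a \emph{source} mode ($\alpha>\lambda_l$), both eigenvalues of $A_l^{-1}$ have negative real part, so $\|e^{-tA_l}\|\le Me^{-\mu t}$ for some $M,\mu>0$, whence $e_l(t)\ge M^{-2}e^{2\mu t}e_l(0)$; thus a single source index $l$ with $(u_l(0),v_l(0))\ne(0,0)$ already forces $\|(u,v)(t)\|^2\ge e_l(t)\to+\infty$. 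Finally, when $\alpha=\lambda_k$ the center mode $E_k$ reads $u_{kt}=-v_k,\ v_{kt}=u_k$, so $e_k$ is constant and $(u_k,v_k)(t)$ stays bounded.

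Putting the pieces together yields both cases. If $\alpha=\lambda_k$ and $u_l(0)=v_l(0)=0$ for every source index $l\in\{0,\dots,k-1\}$, then $(u,v)(t)-\varphi_k(u_k(t),v_k(t))=\sum_{l>k}(u_l(t),v_l(t))\varphi_l$, a sum over sink modes whose $\cH$-norm tends to $0$ by the second step; if instead some source component is non-zero, the source estimate gives $\|(u,v)(t)\|\to+\infty$. If $\lambda_k<\alpha<\lambda_{k+1}$ there is no center mode; when all the source components (those $l$ with $\lambda_l<\alpha$) vanish at $t=0$ only sink modes remain and $\|(u,v)(t)\|\to0$, and otherwise the norm blows up. The one genuine obstacle is the absence of a uniform exponential decay rate among the infinitely many sink modes, due to $\sigma_l^2\to0^-$; as in the proof of Theorem \ref{th:luvhopfRDneg}, the monotone energy identity is precisely what makes the tail argument go through, and the rest is bookkeeping of which modes are sources, sinks, or the center.
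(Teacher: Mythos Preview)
Your argument is correct and is precisely the mode-by-mode analysis the paper intends: in fact the paper gives no separate proof of Theorem~\ref{th:luvhopfRD}, tacitly relying on the machinery already displayed in the proof of Theorem~\ref{th:luvhopfRDneg}, which you have faithfully adapted.

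Two small remarks. First, the decay of $e^{-tA_l}$ for a source mode is governed by the eigenvalues of $-A_l$, not of $A_l^{-1}$; the eigenvalues of $A_l^{-1}$ actually have \emph{positive} real part when those of $A_l$ do. Your conclusion $\|e^{-tA_l}\|\le Me^{-\mu t}$ is correct, only the one-word justification needs this fix. Second, in the case $\lambda_k<\alpha<\lambda_{k+1}$ the theorem as printed requires $u_l(0)=v_l(0)=0$ only for $l\in\{1,\dots,k\}$, but as your proof implicitly recognises, $l=0$ is also a source mode (since $\alpha>\lambda_0=0$) and must vanish as well for $\|(u,v)(t)\|\to0$; the index set in the statement is evidently a typographical slip, and you have proved the intended assertion.
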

\begin{remark}
It is worth noting that for the linear approximation, the above simple computations allow to characterize eigensubspaces of IC leading to convergence toward the fixed point, periodic solutions, or infinity.
\end{remark}
The spectral decomposition of $L^2$ has allowed an exhaustive study of the asymptotic behavior. Next, we consider the nonlinear case and show that spectral decomposition remains useful for asymptotic qualitative analysis.

\subsection{The Nonlinear case}
We now consider the system \eqref{eq:NluvhopfRD}
\begin{equation*}
 \left \{ 
     \begin{array}{rcl}
      u_t&=& \alpha u-u^3-v+u_{xx} \\
      v_t&=& u 
     \end{array}
      \right. 
\end{equation*}

on the domain $(0,1)$ with NBC.\\
For $\alpha<0$, the fixed point $(0,0)$ is still attracting all the IC in $\cH$. Indeed, we have:
\begin{theorem}
\label{th:NluvhopfRD}
 For $\alpha<0$,  for all IC in $\cH$
 \[\lim_{t \rightarrow +\infty}||(u,v)(t)||= 0\]
\end{theorem}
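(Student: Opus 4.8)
The plan is to exhibit $\Phi(t)=\frac12\|(u,v)(t)\|^2$ as a Lyapunov functional and then combine the resulting dissipation estimate with the spectral decomposition used above. Multiplying the first equation of \eqref{eq:NluvhopfRD} by $u$, integrating over $(0,1)$, using the Neumann conditions ($\int_0^1 u\,u_{xx}=-\|u_x\|_{L^2}^2$) and the second equation (which gives $\int_0^1 uv=\int_0^1 v\,v_t=\frac12\frac{d}{dt}\|v\|_{L^2}^2$), one obtains, along the Galerkin approximations and hence in the limit,
\[\dot\Phi(t)=\alpha\|u\|_{L^2}^2-\|u\|_{L^4}^4-\|u_x\|_{L^2}^2\le \alpha\|u\|_{L^2}^2\le 0 .\]
Thus $\Phi$ is nonincreasing, so every orbit stays in a fixed ball of $\cH$, and integrating in time yields $\int_0^\infty\|u(t)\|_{L^2}^2\,dt<\infty$ and $\int_0^\infty\|u_x(t)\|_{L^2}^2\,dt<\infty$.

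Next I would upgrade this to $\|u(t)\|_{L^2}\to 0$. Since $\Phi$ is nonincreasing there is a bounded absorbing set, so parabolic smoothing for the semilinear one–dimensional $u$–equation gives a uniform–in–time bound on $\|u(t)\|_{H^1}$ for $t\ge 1$, hence on $\|u(t)\|_{L^\infty}$. Then $\frac{d}{dt}\|u\|_{L^2}^2=2\int_0^1 u\,u_t$ is bounded on $[1,\infty)$ (each term of $u_t=\alpha u-u^3-v+u_{xx}$ being controlled), so $t\mapsto\|u(t)\|_{L^2}^2$ is uniformly continuous; together with its finite integral this forces $\|u(t)\|_{L^2}^2\to 0$ by Barbalat's lemma. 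In particular $\|u(t)\|_{L^\infty}\to 0$ and $\|u^3(t)\|_{L^2}^2\le\|u(t)\|_{L^\infty}^4\|u(t)\|_{L^2}^2\to 0$.

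To reach the $v$–component I would use the spectral decomposition $u=\sum_k u_k\varphi_k$, $v=\sum_k v_k\varphi_k$, for which the modes obey $\dot u_k=(\alpha-\lambda_k)u_k-v_k-(u^3)_k$, $\dot v_k=u_k$, with $(u^3)_k=\int_0^1 u^3\varphi_k$ and $\lambda_k=k^2\pi^2$. Because $\alpha<0$, one has $\alpha-\lambda_k<0$ for every $k\ge 0$, so for a small $\beta_k>0$ the modified mode energy $\tilde e_k=u_k^2+v_k^2+\beta_k u_k v_k$ (equivalent to $u_k^2+v_k^2$) satisfies $\dot{\tilde e}_k\le -c_k\tilde e_k+C_k (u^3)_k^2$ with $c_k,C_k>0$; since $(u^3)_k(t)^2\le\|u^3(t)\|_{L^2}^2\to 0$, Gronwall gives $u_k(t)^2+v_k(t)^2\to 0$ for each fixed $k$. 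Finally, from $\frac{d}{dt}(u_k^2+v_k^2)\le(\alpha-\lambda_k)u_k^2+(u^3)_k^2/|\alpha-\lambda_k|\le (u^3)_k^2/|\alpha-\lambda_k|$ one gets, for $t\ge1$, $u_k(t)^2+v_k(t)^2\le u_k(1)^2+v_k(1)^2+\frac{1}{|\alpha-\lambda_k|}\int_1^\infty\|u^3\|_{L^2}^2\,dt$, and this majorant is summable in $k$ since $\sum_k 1/|\alpha-\lambda_k|<\infty$ (as $\lambda_k=k^2\pi^2$) and $\sum_k (u_k(1)^2+v_k(1)^2)=\|(u,v)(1)\|^2<\infty$. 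Dominated convergence in $\ell^1$ then yields $\|(u,v)(t)\|^2=\sum_k(u_k^2+v_k^2)\to 0$.

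The main obstacle is the $v$–equation: it carries no diffusion, so $v$ need not be smoother than $L^2$ and its orbit need not be precompact in $\cH$, which blocks a naive LaSalle argument on the pair $(u,v)$. The mode‑by‑mode device above circumvents this, and it is precisely the fast growth of the Neumann eigenvalues $\lambda_k=k^2\pi^2$ — making $\sum_k 1/|\alpha-\lambda_k|$ finite — that lets the mode‑wise limits be summed. A secondary, routine point is the uniform‑in‑time $H^1$ (hence $L^\infty$) bound on $u$ used to handle the cubic term, which follows from standard parabolic regularity for the $u$–equation in one space dimension, in the spirit of the well‑posedness results recalled in the introduction.
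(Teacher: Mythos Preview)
Your argument is correct but proceeds along a genuinely different route from the paper's. The paper does \emph{not} bypass LaSalle; instead it first restricts to initial data in $\cV=H^1\times H^1$, observes that $\|(u_x,v_x)\|^2$ is also nonincreasing (the same computation you would make for $\Phi$, differentiated once in $x$), and thereby obtains a uniform $\cV$--bound on the full pair $(u,v)$. Compact embedding $H^1\hookrightarrow L^2$ then makes the orbit precompact in $\cH$, so LaSalle applies and forces $\omega(u,v)=\{(0,0)\}$. The extension from $\cV$ to general $\cH$ is done by density together with the observation that the difference of two solutions has nonincreasing $\cH$--norm. So your diagnosis that ``$v$ carries no diffusion \ldots\ which blocks a naive LaSalle argument'' is accurate for data merely in $\cH$, but the paper sidesteps it by this two--step ($\cV$ first, then density) device rather than by abandoning LaSalle.

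Your alternative --- Barbalat for $\|u\|_{L^2}$, then a mode--by--mode Lyapunov estimate with an $\ell^1$ dominated--convergence tail bound exploiting $\sum_k|\alpha-\lambda_k|^{-1}<\infty$ --- is more hands--on and more quantitative: it never requires precompactness of the $v$--component and works directly for $\cH$ data once $u$ has been smoothed by the parabolic part. The paper's approach is shorter and more structural, but it relies on the somewhat lucky fact that the $\cV$--energy is itself a Lyapunov function; your approach would survive perturbations that destroy that second Lyapunov identity, provided the spectral gap estimate $\sum_k|\alpha-\lambda_k|^{-1}<\infty$ persists. One small point: you do not actually need $\|u\|_{L^\infty}\to 0$; a uniform $L^\infty$ bound on $u$ (from the $H^1$ bound) together with $\|u\|_{L^2}\to 0$ already gives $\|u^3\|_{L^2}\to 0$ and $\int_1^\infty\|u^3\|_{L^2}^2\,dt<\infty$, which is all your tail argument uses.
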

\begin{proof}
The proof relies on LaSalle's principle, see \cites{BookTemam,BookCaz1998}. For the reader's convenience, we give some details of the rigorous proof. We consider arbitrary IC $(u(x,0),v(x,0)) \in \cH$. And consider the solution $(u^n=\sum_{k=0}^nu^n_k\varphi_k,\sum_{k=0}^nv^n_k\varphi_k))$ of the approximated problem:
\begin{equation*}
 \left \{ 
     \begin{array}{rcl}
      u^n_{kt}&=& \alpha u^n_k-\int_\Omega (\sum_{l=0}^nu^n_l\varphi_l)^3\varphi_kdx-v_k^n-\lambda_ku_k^n \,\,\, k\in \{0,...,n\}\\
      v^n_{kt}&=& u_k^n 
     \end{array}
      \right. 
\end{equation*}
with IC $u^n(x,0)=\sum_{k=0}^n\varphi_k\int_0^1u(x,0)\varphi_kdx,v^n(x,0)=\sum_{k=0}^n\varphi_k\int_0^1v(x,0)\varphi_kdx$.\\The solution of the problem is the limit as $n$ goes to infinity of $(u^n,v^n)$.
Next, first observe that, for fixed $n$,
\begin{equation*}
\begin{array}{rcl}
\frac{d}{dt}||(u^n,v^n)(t)||^2&=&\alpha|u^n|_{L^2}^2-\int_0^1(u^n)^4dx-\int_0^1((u^n)_x)^2dx\\
&\leq&0.
\end{array}
\end{equation*}
 Furthermore,
\begin{equation*}
\begin{array}{rcl}
\frac{d}{dt}||(u^n_x,v^n_x)(t)||^2&=&\alpha|u^n_x|_{L^2}^2-3\int_0^1(u^n)^2((u^n)_x)^2dx-\int_0^1(u^n_{xx})^2dx\\
&\leq&0.
\end{array}
\end{equation*}
which proves that the trajectories are bounded in $\cV$. Thanks to the compact injection from $H^1$ into $L^2$, we have compacity allowing us to apply LaSalle's principle. But here, the $\omega-limit$ set of the trajectory ensued from $(u^n(x,0),v^n(x,0)$ is reduced to $(0,0)$. It follows that 
\[\lim_{t \rightarrow +\infty}||(u^n,v^n)(t)||=0\]
 in $\cH$.\\
  Now, fix $\epsilon>0$. And fix $n$ such that 
  \[||(u^n(x,0)-u(x,0),v^n(x,0)-v(x,0))||<\frac{\epsilon}{2}\]
  A simple computation shows that for any $p>n$,
  \[\frac{d}{dt}||(u^n-u^p,v^n-v^p)||^2\leq 0\]
  which leads to, letting $p$ go to infinity to
    \[||(u^n(x,t)-u(x,t),v^n(x,t)-v(x,t))||<\frac{\epsilon}{2} \mbox{ for all } t\geq 0.\]
Which yields
\[\lim_{t \rightarrow +\infty}||(u,v)(t)||=0\] 
\end{proof}

trajectories distinct from $(0,0)$.
It follows that, for $\alpha>0$, $(0,0)$ becomes unstable for any solution constant in space different from $(0,0)$ will evolve towards the limit cycle of the $ODE$. However, this system allows for the construction of specific solutions of interest. 

\begin{theorem}
For $0<\alpha<\lambda_1$, if $u(x)=-u(1-x)$ and $v(x)=-v(1-x)$ then for all IC in $\cH$
 \[\lim_{t \rightarrow +\infty}||(u,v)(t)||= 0\]
\end{theorem}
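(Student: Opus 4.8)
The plan is to notice that the hypothesis on the data says exactly that $(u,v)(\cdot,0)$ lies in the closed subspace
\[
\cH^-=\{(u,v)\in\cH:\ u(x)=-u(1-x),\ v(x)=-v(1-x)\ \text{a.e.}\},
\]
and that $\cH^-$ is invariant under the semiflow of \eqref{eq:NluvhopfRD}, so that on $\cH^-$ the lowest eigenvalue that ever occurs is $\lambda_1$. First I would identify $\cH^-$ spectrally: since $\varphi_k(1-x)=(-1)^k\varphi_k(x)$, a function belongs to $\cH^-$ if and only if its expansion on $(\varphi_k)_{k\in\N}$ uses only odd indices $k$; in particular the constant mode $\varphi_0$ is absent. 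Invariance then follows from uniqueness: if $(u,v)$ solves \eqref{eq:NluvhopfRD} with an antisymmetric datum, so does $(x,t)\mapsto(-u(1-x,t),-v(1-x,t))$, because $u\mapsto u^3$, the linear terms and $\partial_{xx}$ all commute with the reflection $x\mapsto 1-x$ composed with a sign change; the two solutions have the same initial condition, hence coincide, so the solution stays in $\cH^-$ for all $t>0$.

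The key quantitative input is a Poincaré-type inequality on $\cH^-$: writing $u=\sum_{k\text{ odd}}u_k\varphi_k$ one has $\int_0^1 u_x^2\,dx=\sum_k\lambda_k u_k^2\ge\lambda_1\int_0^1 u^2\,dx$ and, likewise, $\int_0^1 u_{xx}^2\,dx=\sum_k\lambda_k^2u_k^2\ge\lambda_1\int_0^1 u_x^2\,dx$, since every eigenvalue appearing satisfies $\lambda_k\ge\lambda_1$. This is the mechanism by which $\alpha<\lambda_1$ plays the role that $\alpha<0$ played in Theorem~\ref{th:NluvhopfRD}, and the rest of the argument copies that proof. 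For an initial datum in $\cV\cap\cH^-$,
\[
\frac{d}{dt}\|(u,v)(t)\|^2=\alpha|u|_{L^2}^2-\int_0^1 u^4\,dx-\int_0^1 u_x^2\,dx\le(\alpha-\lambda_1)|u|_{L^2}^2-\int_0^1 u^4\,dx\le0,
\]
so $\|(u,v)\|^2$ is a Lyapunov function; and
\[
\frac{d}{dt}\|(u_x,v_x)(t)\|^2=\alpha|u_x|_{L^2}^2-3\int_0^1 u^2u_x^2\,dx-\int_0^1 u_{xx}^2\,dx\le(\alpha-\lambda_1)|u_x|_{L^2}^2\le0,
\]
so the orbit is bounded in $\cV$ and hence, by the compact embedding $H^1\hookrightarrow L^2$, precompact in $\cH$. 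LaSalle's invariance principle then applies: on $\{\tfrac{d}{dt}\|(u,v)\|^2=0\}$ one has $u\equiv0$, and the $u$-equation forces $v\equiv0$, so the largest invariant subset there is $\{(0,0)\}$ and $\|(u,v)(t)\|\to0$.

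It remains to pass from $\cV\cap\cH^-$ to all of $\cH^-$, as at the end of the proof of Theorem~\ref{th:NluvhopfRD}: the odd-indexed trigonometric polynomials are dense in $\cH^-$, and for two solutions with data in $\cH^-$ the difference $(w,z)=(u_2-u_1,v_2-v_1)$ satisfies
\[
\frac{d}{dt}\|(w,z)(t)\|^2=\alpha|w|_{L^2}^2-\int_0^1 w\,(u_2^3-u_1^3)\,dx-\int_0^1 w_x^2\,dx\le(\alpha-\lambda_1)|w|_{L^2}^2\le0,
\]
because $w(u_2^3-u_1^3)=w^2(u_1^2+u_1u_2+u_2^2)\ge0$ and $w$ is again antisymmetric; thus the $\cH$-distance between two solutions in $\cH^-$ is nonincreasing, and an $\epsilon/2$ argument transfers the convergence to every initial datum in $\cH^-$. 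The only genuinely delicate point is the first one — justifying rigorously, at the level of finite-energy solutions, both the invariance of $\cH^-$ and the two a priori estimates; this is done exactly via the Galerkin construction underlying the well-posedness results recalled above and the proof of Theorem~\ref{th:NluvhopfRD}, now carried out on the span of the odd eigenfunctions, where all the inequalities above are immediate and stable under passage to the limit. Everything else is the $\lambda_1$-shifted analogue of the already established case $\alpha<0$.
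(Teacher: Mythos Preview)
Your proof is correct and follows essentially the same route as the paper: both exploit the antisymmetry to eliminate the constant mode (hence every surviving eigenvalue is $\ge\lambda_1$), compute the same two energy identities for $\|(u,v)\|^2$ and $\|(u_x,v_x)\|^2$, and then invoke LaSalle's principle exactly as in Theorem~\ref{th:NluvhopfRD}. Your write-up is somewhat more explicit than the paper's---you spell out the invariance of $\cH^-$ via uniqueness, phrase the spectral gain as a Poincar\'e inequality, and carry out the density step in detail---but the argument is the same.
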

\begin{proof}
By symmetry, $\int_0^1 u(x,t)dx=\int_0^1v(x,t)dx =0$. 
Then, we apply LaSalle's Principle as in the proof of \ref{th:NluvhopfRD}, within the positive invariant subspace defined by $\int_0^1 u(x,t)dx=\int_0^1v(x,t)dx =0$. Indeed, we have:
\begin{equation*}
\begin{array}{rcl}
\frac{d}{dt}||(u,v)(t)||^2&=&\alpha|u|_{L^2}^2-\int_0^1u^4dx-\int_0^1u_x^2dx\\
&=&\sum_{k=1}^\infty(\alpha-\lambda_k)u_k^2-\int_0^1u^4dx\\
&\leq& 0

\end{array}
\end{equation*}
and,
\begin{equation*}
\begin{array}{rcl}
\frac{d}{dt}||(u_x,v_x)(t)||^2&=&\alpha|u_x|_{L^2}^2-3\int_0^1u^2u_x^2dx-\int_0^1u_{xx}^2dx\\
&=&\sum_{k=1}^\infty(\alpha-\lambda_k)\lambda_ku_k^2-3\int_0^1u^2u_x^2dx\\
&\leq&0.
\end{array}
\end{equation*}
\end{proof}
\begin{remark}
Note that although $(0,0)$ is unstable for $0<\alpha<\lambda_1$, this theorem characterizes a set included in the basin of attraction of $(0,0)$. The arguments provided here apply to larger dimensions. For example, when the space dimension is 2, this set contains IC leading to pattern formation such as spirals (for $\alpha>\alpha_1$); see \cite{Amb-2016} where equation \eqref{FHNRD} was studied theoretically and numerically for $c=0$. 
\end{remark}

The following lemma proves that a large number of nonlinear terms have a zero integral.
\begin{lemma}
Let $k,m,n,l \in \N, k,m,n,l>0$, then
\[\int_0^1\varphi_k(x)\varphi_m(x)\varphi_n(x)dx\neq 0\]
if and only if 
\[k+m=n  \mbox{ or }  k+n=m  \mbox{ or }  m+n= k.\]
And 
\[\int_0^1\varphi_k(x)\varphi_l(x)\varphi_m(x)\varphi_n(x)dx\neq 0\]
if and only if one of the subscripts is the sum of the three others or the sum of two of them equals the sum of the other two.
\end{lemma}
The next theorem provides a local stability result for $0<\alpha<\lambda_1.$
\begin{theorem}
\label{th:NLHopf-stab}
For $0<\alpha<\lambda_1$,  there exists a sequence $(\mu_k)_{k\in \N}$ such that if  
\[(u_k(0),v_k(0))\in B(0,\mu_k)\]
then
\[\lim_{t\rightarrow +\infty}||(u(t)-u_0(t),v(t)-v_0(t))||=0,\]
 where $B(0,\mu_k)\subset \R^2$ is the ball of center $(0,0)$ and radius $\mu_k$. 
 
\end{theorem}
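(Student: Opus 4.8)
The plan is to combine the spectral decomposition with an energy/Lyapunov estimate restricted to the "high modes" subspace, and to treat the finitely many low modes (here just $k=0$, the spatially constant mode, since $0<\alpha<\lambda_1$) separately using the known ODE dynamics. Write $(u,v)(t)=\sum_{k\ge 0}(u_k(t),v_k(t))\varphi_k$ and split $(u,v)=(u_0,v_0)\varphi_0 + (w,z)$ where $(w,z):=\sum_{k\ge 1}(u_k,v_k)\varphi_k$ is the projection onto the orthogonal complement of the constants. The goal is to show that if the high-mode part $(w,z)(0)$ is small enough (this is what the radii $\mu_k$ for $k\ge 1$ encode), then $\|(w,z)(t)\|\to 0$; the statement as written asks exactly for $\|(u-u_0,v-v_0)(t)\|\to 0$, i.e. $\|(w,z)(t)\|\to 0$.

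First I would derive the evolution of the high-mode energy $\Phi(t):=\tfrac12\|(w,z)(t)\|^2$. Since $v_t=u$ has no diffusion and no nonlinearity, the $z$-contribution is handled by the usual FitzHugh–Nagumo trick of working with $\|(w,z)\|^2$ so that the cross terms $-\int w z_t$ and $+\int z u$ telescope; what remains is
\begin{equation*}
\frac{d}{dt}\Phi(t) = \sum_{k\ge 1}(\alpha-\lambda_k)u_k^2 - \int_0^1 u^3 w\,dx .
\end{equation*}
Because $\alpha<\lambda_1\le\lambda_k$ for all $k\ge 1$, the linear part is bounded above by $(\alpha-\lambda_1)\sum_{k\ge1}u_k^2 = -( \lambda_1-\alpha)\|w\|_{L^2}^2<0$, giving a spectral gap. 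The cubic term is the obstruction: expanding $u^3=(u_0+w)^3$ produces, besides genuinely small terms, the resonant contribution $3u_0^2\int w^2 = 3u_0^2\|w\|_{L^2}^2$, whose coefficient $3u_0^2$ is NOT small a priori — the constant mode $(u_0,v_0)$ runs around the ODE limit cycle and $u_0$ need not stay near $0$. The key algebraic observation that saves the argument is that the offending term has a sign: $-\int_0^1 u^3 w\,dx = -\int_0^1(u_0+w)^3 w\,dx$, and one checks $(u_0+w)^3 w - u_0^3 w \ge$ (something controlling $w^4$ minus lower order), so after absorbing $-u_0^3\int w\,dx = 0$ (orthogonality to constants!) one is left with $-\int w^4 - 3u_0\int w^3 - 3u_0^2\int w^2$, and the genuinely dangerous $-3u_0^2\|w\|_{L^2}^2$ term is NEGATIVE. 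Hence in fact
\begin{equation*}
\frac{d}{dt}\Phi(t)\le -(\lambda_1-\alpha)\|w\|_{L^2}^2 - 3u_0\!\int_0^1 w^3\,dx - \int_0^1 w^4\,dx,
\end{equation*}
and the only indefinite term is the cubic $-3u_0\int w^3$, which is controlled by Young's inequality: $|3u_0 w^3|\le \tfrac12 w^4 + C u_0^4 w^2$ — still a $u_0^4 w^2$ term. This is the genuine difficulty, and I expect it to be the main obstacle: one cannot get global decay of $\Phi$ this way because $u_0(t)$ is $O(1)$ along the limit cycle.

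To finish I would therefore run the argument locally and bootstrap, exactly as the "there exists a sequence $\mu_k$" phrasing suggests. Fix a priori bounds: for IC with $(u_k(0),v_k(0))\in B(0,\mu_k)$ and $\sum\mu_k^2$ small, the constant mode starts near $0$ and, by continuity of the ODE flow and the fact that $(0,0)$ is merely unstable (not a repeller on short times), $u_0(t)$ stays below a chosen threshold $\eta$ on a time interval; on that interval the estimate above gives $\frac{d}{dt}\Phi \le -(\lambda_1-\alpha-C\eta^2)\Phi<0$ once $\eta$ is small, so $\Phi$ decreases, which in turn keeps the feedback $-\int u_0^3 w - \ldots$ small in the $(u_0,v_0)$-equation, so $(u_0,v_0)$ stays near $(0,0)$ for all time rather than escaping to the limit cycle. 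Thus a standard continuity/barrier argument (define $T^*=\sup\{t:\ \Phi(s)\le\Phi(0),\ |u_0(s)|\le\eta\ \forall s\le t\}$ and show $T^*=+\infty$) closes the loop: $\Phi(t)$ is nonincreasing and, via the strict spectral-gap inequality and integrability of $\|w\|_{L^2}^2$, $\Phi(t)\to 0$. One also needs the regularity to make the mode-wise computation and the LaSalle-type compactness rigorous — but, as in Theorems \ref{th:NluvhopfRD} and the symmetric-solution theorem, the $\cV$-bound from $\frac{d}{dt}\|(u_x,v_x)\|^2\le 0$-type estimates plus the compact embedding $H^1\hookrightarrow L^2$, together with the density $\overline{\cV}=\cH$ and the contraction of differences of solutions, upgrade the conclusion from $\cV$ to all of $\cH$. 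The choice $\mu_k\to 0$ (rather than a uniform $\mu$) enters because the $\cV$-norm of $(w,z)(0)$ must be controlled, and $\|(w,z)(0)\|_{\cV}^2\sim\sum_{k\ge1}\lambda_k(u_k(0)^2+v_k(0)^2)$ forces $\mu_k$ to decay.
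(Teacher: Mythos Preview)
Your barrier argument has a genuine gap. You propose to keep $|u_0(t)|\le\eta$ for all time by arguing that the high-mode feedback into the $(u_0,v_0)$ equation stays small once $\Phi$ decreases. But for $0<\alpha$ the origin is a \emph{source} for the constant-mode system $E_0$: even with zero feedback, any $(u_0(0),v_0(0))\neq(0,0)$ escapes toward the limit cycle. Smallness of the coupling cannot stabilize an intrinsically unstable fixed point, so the continuation time $T^*$ in your barrier is finite and the argument does not close. This is not a technicality: the theorem's conclusion $\|(u-u_0,v-v_0)\|\to 0$ is meant to hold while $(u_0,v_0)$ runs off to the ODE limit cycle, so any proof must tolerate $u_0$ of order one.

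The paper's proof never controls $u_0$. It chooses $\mu_k$ (for $k\ge 1$) so that a mode-wise trapping estimate $|u_k(t)|\le 2\varepsilon/|\alpha-\lambda_k|$ persists; since $\sum_k 1/|\alpha-\lambda_k|<\infty$, this gives $\sum_{k\ge 1}|u_k|\lesssim\varepsilon$. In the energy identity for $\sum_{k\ge 1}(u_k^2+v_k^2)$ the indefinite terms are then bounded by $C\varepsilon\sum_{k\ge 1}u_k^2$, with $C$ depending only on a uniform bound for $|u_0|$ (supplied by the bounded ODE attractor), while the favorable $-3u_0^2$ term and the gap $\alpha-\lambda_k\le\alpha-\lambda_1<0$ absorb this for $\varepsilon$ small. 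The paper then concludes by a direct argument (it explicitly notes that the LaSalle/$H^1$-compactness route used earlier is not invoked here).

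You were one step away: having observed that $-3u_0^2\|w\|_{L^2}^2$ helps, estimate instead $|3u_0\!\int w^3|\le 3|u_0|\,\|w\|_{L^\infty}\|w\|_{L^2}^2$ and note that the quadratic $3u_0^2-3\|w\|_{L^\infty}|u_0|+(\lambda_1-\alpha)$ is positive for \emph{all} $u_0\in\R$ once $\|w\|_{L^\infty}$ is below the fixed threshold $2\sqrt{(\lambda_1-\alpha)/3}$. The role of the sequence $(\mu_k)$ is precisely to enforce and propagate this $L^\infty$ (equivalently $\ell^1$) smallness of the high modes, not to pin $u_0$ near $0$.
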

\begin{proof}

We can write:
\begin{equation}
\label{eq:NluvhopfRD-2-E_0}
 (E_0)\left \{ 
     \begin{array}{rcl}
      u_{0t}&=& \alpha u_0-u_0^3-v_0-3u_0\sum_{i=1}^{+\infty}u_i^2-g_0\\
      v_{0t}&=& u_k   
     \end{array}
      \right. 
\end{equation}

where 
\[|g_0|\leq 3\frac{\sqrt{2}}{2}\sum_{i=1}^{+\infty}|u_i|\sum_{j=1}^{+\infty}|u_j|^2\]
and for $k\geq 1$,
\begin{equation}
\label{eq:NluvhopfRD-2-E_kgeq1}
 (E_k)\left \{ 
     \begin{array}{rcl}
      u_{kt}&=& (\alpha-\lambda_k-3u_0^2)u_k-v_k-9\frac{\sqrt{2}}{2}u_0(\sum_{i=1}^{+\infty}u_iu_{k+i} )-g_k\\
      v_{kt}&=& u_k   
     \end{array}
      \right. 
\end{equation}

where 
\[|g_k|\leq \frac{7}{2}\sum_{i=1}^{+\infty}|u_i|\sum_{j=1}^{+\infty}|u_j|^2\]
It follows that the nonlinear terms are bounded by:
\[C\sum_{i=1}^{\infty}|u_i|\sum_{i=1}^\infty u_i^2,\]
where $C$ is a constant.
Thanks to the dynamics of each 2d ODE system $E_k$, one can ensure that for any arbitrary small $\epsilon$,  for all $k \in \N$,
there exists $\mu_k$ such that if $(u_k(0),v_k(0))$ belongs to $B(0,\mu_k)$  the following estimates are valid:
\[|u_k|\leq 2 \frac{\epsilon}{|\alpha-\lambda_k|}, \, |v_k|\leq \epsilon \]
Thanks to the values of $\lambda_k=k^2\pi^2$, this actually implies that the series $\sum |u_k|$ is convergent. Note that, we do not have anymore bounds in $H^1$, in this case, and we cannot apply the LaSalle's principle as previously. We need to construct arguments by hand. 
Note further that,
\[0.5\frac{d}{dt}\sum_{k=1}^{+\infty}(u_k^2+v_k^2)=\sum_{k=1}^{+\infty}\big((\alpha-\lambda_k-3u_0^2)u_k^2-9\frac{\sqrt{2}}{2}u_0u_k(\sum_{i=1}^{+\infty}u_iu_{k+i} )-g_ku_k\big)\]
which yields
\[0.5\frac{d}{dt}\sum_{k=1}^{+\infty}(u_k^2+v_k^2)\leq \sum_{k=1}^{+\infty}\big(\alpha-\lambda_k-3u_0^2)u_k^2\big)+9\frac{\sqrt{2}}{2}u_0\sum_{k=1}^{+\infty}\big( u_k^2\sum_{i=1}^{+\infty}|u_i| \big)+C\sum_{k=1}^\infty u_k^2\sum_{i=1}^{\infty}|u_i| \]
Thanks to the bound $ |u_k|\leq 2 \frac{\epsilon}{|\alpha-\lambda_k|}$ and the convergence of $\sum  \frac{1}{|\alpha-\lambda_k|}$ , 
  we obtain for a certain constant $C$:
  \[0.5\frac{d}{dt}\sum_{k=1}^{+\infty}(u_k^2+v_k^2)\leq \sum_{k=1}^{+\infty}\big(C\epsilon+\alpha-\lambda_k-3u_0^2)u_k^2\big) \]
 Therefore one can choose $\epsilon$ such that:
\[\frac{d}{dt}\sum_{k=1}^{+\infty}(u_k^2+v_k^2)\leq 0\]
as well as
\[\frac{d}{dt}\sum_{k=1}^{+\infty}(u_k^2+v_k^2)<0,\]
as long as,
\[\sum_{k=1}^{+\infty} u_k^2 >0.\]
This implies that $\sum_{k=1}^{+\infty}(u_k^2+v_k^2)$ converges to a real value as $t$ goes to infinity. Further arguments omitted here allow to prove that the limit is $0$.
 
\end{proof}

\subsection{Numerical simulations}
In this section, we provide a few numerical simulations for system \eqref{eq:NluvhopfRD}. 
Our simulations have been performed using our own $C^{++}$ program with an $RK4$ numerical scheme. We use
 a time step of $10^{-5}$ and a space step of $0.02$. We illustrate simulations for values of the parameter $\alpha \in \{1,15\}$. For each value of $\alpha$,  we show two pictures : one corresponding to a solution with symmetry which implies that $\int udx=\int v dx=0$ and another one corresponding to a solution for which those integrals are non zero.  
 
 In figure $1$, we simulate  system \eqref{eq:NluvhopfRD} for $0<\alpha=1<\lambda_1$. IC satisfy the symmetric condition $u(x)=u(1-x)$ and $v(x)=v(1-x)$. ore precisely, or this figure $u(x,0)=v(x,0)=0.5$ on $(0,0.5)$, and $u(x,0)=v(x,0)=-0.5$ on $(0.5,1)$. This implies a symmetry for the solution for all positive times.  According to theorem \ref{th:NluvhopfRD}, the solution converges toward $(0,0)$ in $\cH$. The observation of $u$ in panel $A$ illustrates this theoretical result. For $v$, as illustrated in panel B, the evolution is slower. In panel C, we represent two trajectories corresponding to the time evolution of $u$ for two fixed values of $x$. One at $x=0.04$ in red and the other at $x=0.96$ in blue.
 
In figure $2$, we simulate  again system \eqref{eq:NluvhopfRD} for $0<\alpha=1<\lambda_1$. But here, IC do not satisfy the symmetric condition $u(x)=u(1-x)$ and $v(x)=v(1-x)$. In this figure, we set $u(x,0)=v(x,0)=0.5$ on $(0,0.5)$, $u(x,0)=v(x,0)=0$ on $(0.5,1)$.   The simulations show that the solution $u$ reaches asymptotically a constant function in space with periodicity in time. There is also periodicity in time for $v$, but the evolution of the shape in space  is slower. Asymptotically, since solutions are constant in space, the qualitative behavior is given by the ODE diffusion-less system.

In figure $3$, we simulate  system \eqref{eq:NluvhopfRD} for $\lambda_1<\alpha=15<\lambda_2$.  IC satisfy $u(x)=u(1-x)$ and $v(x)=v(1-x)$ which implies that $\int udx=\int v dx=0 i.e.$ $u_0(t)=v_0(t)=0$.  We observe that the solution $u$ evolves non constantly in space with periodicity in time.

In figure $4$, IC do not satisfy the symmetric condition $u(x)=u(1-x)$ and $v(x)=v(1-x)$. We observe that the solution $u$ reaches a constant function in space with  periodicity in time. We can note the difference of the amplitude of the limit cycle with the previous simulation; for these IC, $u_0(t)$ and $v_0(t)$ are no longer zero. 

Lastly, in figure 5, we illustrate $u$ as a function of $x$ ant $t$.

\begin{figure}
\includegraphics[scale=0.4]{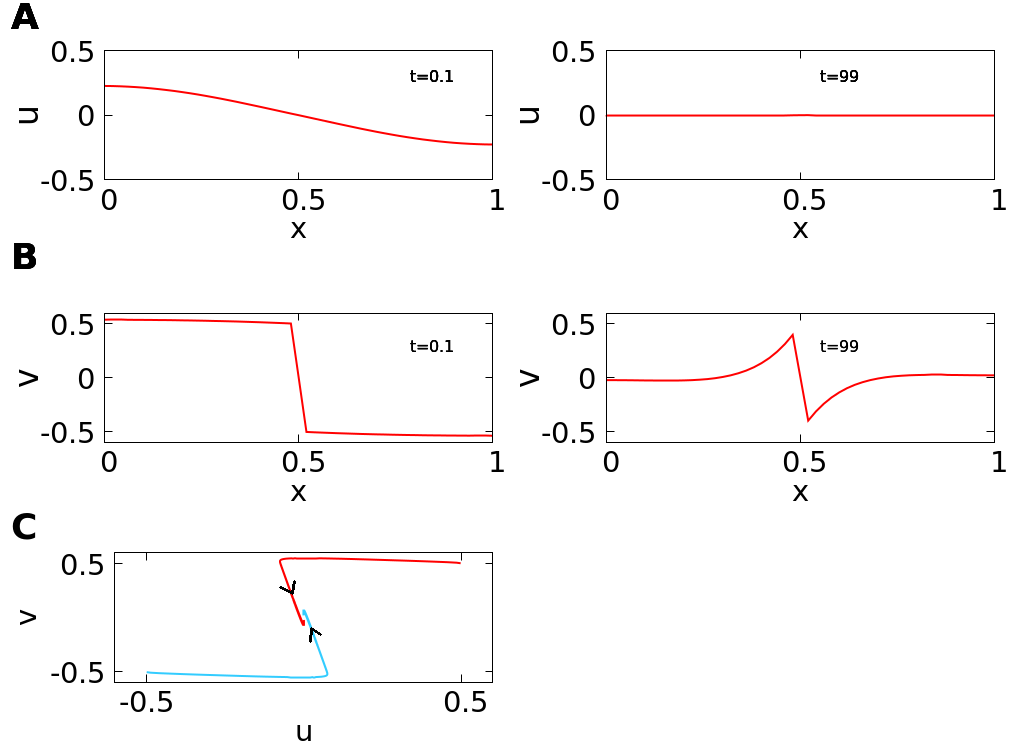}
\caption{Simulation of system \eqref{eq:NluvhopfRD} for $0=\alpha=1<\lambda_1$. IC satisfy the symmetric condition $u(x)=-u(1-x)$ and $v(x)=-v(1-x)$. We choose $u(x,0)=v(x,0)=0.5$ on $(0,0.5)$, $u(x,0)=v(x,0)=-0.5$ on $(0.5,1)$ .   According to theorem \ref{th:NluvhopfRD}, the solution converges toward $(0,0)$ in $\cH$. The panel A illustrates $u(x,t)$ for $x\in (0,1)$ and $t=0.1$ (left), $t=99$ (right). We observe that the solution $u$ reaches the constant function $0$ as predicted by the theory. The panel B illustrates $v(x,t)$ for $x\in (0,1)$ and $t=0.1$ (left), $t=100$ (right). Note that, since after some time, $u$ is close to $0$, according to the equation $v_t=u$, the evolution is slow. The last panel C shows the evolution in the time interval $(0,100)$ for fixed $x=0.02$ in red and $x=0.98$ in blue. Note the symmetry of the two trajectories }

\end{figure}

\begin{figure}
\includegraphics[scale=0.4]{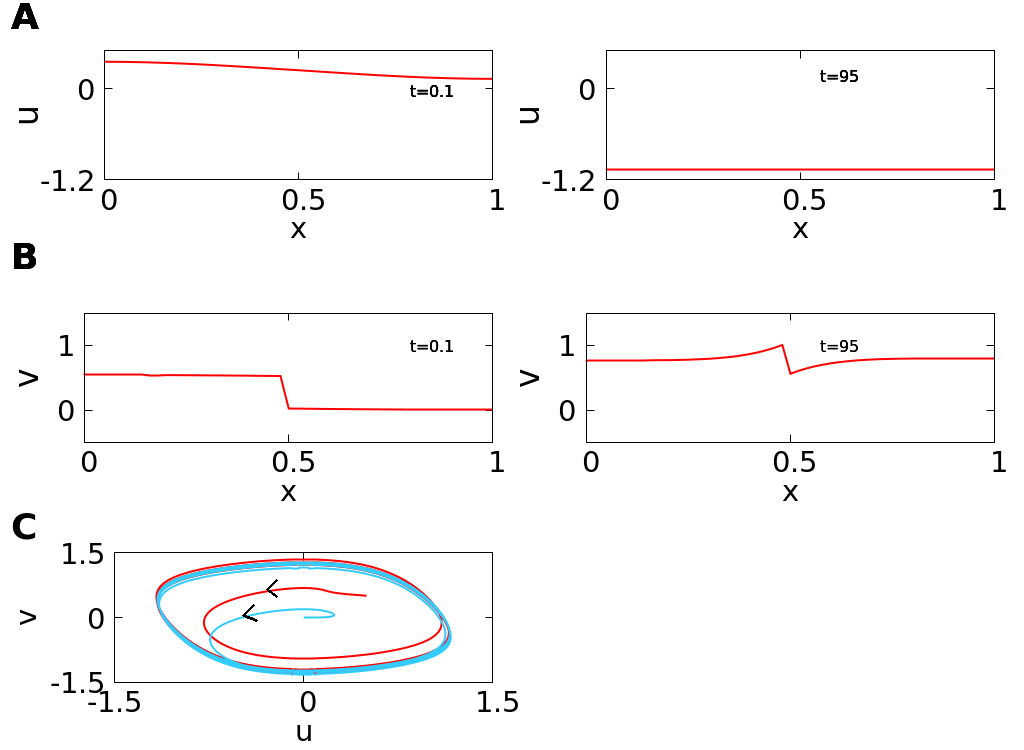}
\caption{Simulation of system \eqref{eq:NluvhopfRD} for $0=\alpha=1<\lambda_1$. IC do not satisfy the symmetric condition $u(x)=u(1-x)$ and $v(x)=v(1-x)$. Here, we set $u(x,0)=v(x,0)=0.5$ on $(0,0.5)$, $u(x,0)=v(x,0)=0$ on $(0.5,1)$ . The first row illustrates $u(x,t)$ for $x\in (0,1)$ and $t=0.1$ (left), $t=95$ (right). We observe that the solution $u$ reaches asymptotically a constant function in space. A look at the picture in the last row indicates a periodicity in time. The second row illustrates $v(x,t)$ for $x\in (0,1)$ and $t=0.1$ (left), $t=95$ (right). Complementary observation not illustrated here suggests that the shape of $v$ is moving slowly while its mean value is moving periodically fast. The last row shows the evolution in the time interval $(0,100)$ for fixed $x=0.02$ in red and $x=0.98$ in blue.}

\end{figure}
\begin{figure}
\includegraphics[scale=0.4]{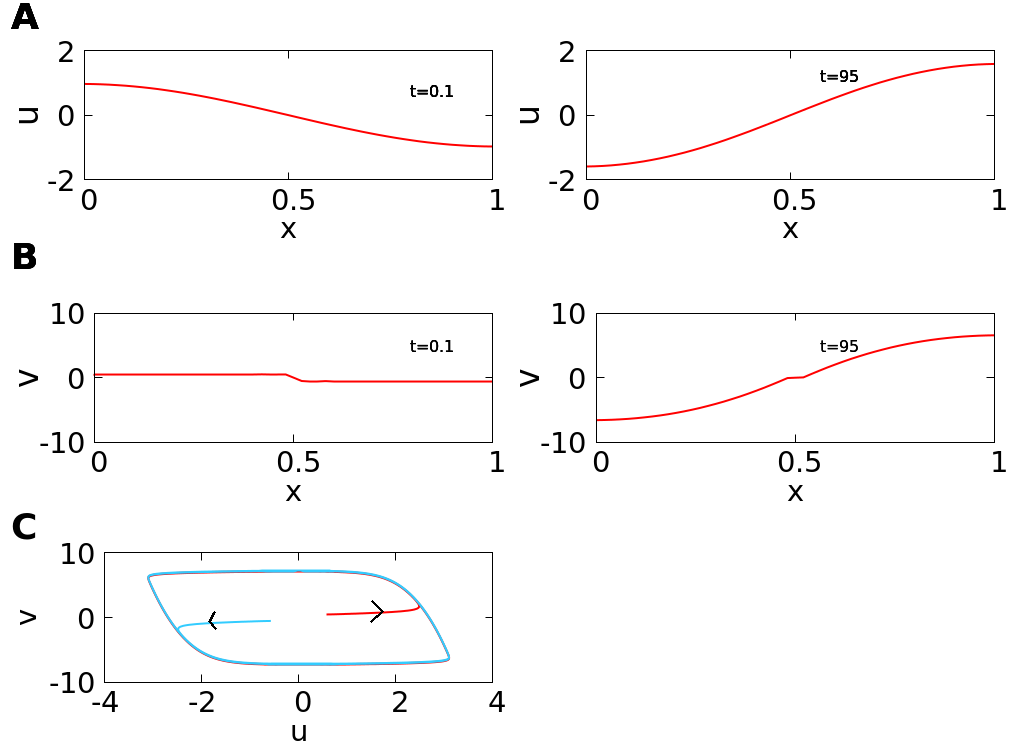}
\caption{The case: $\lambda_1<\alpha=15<\lambda_2$.  IC satisfy the symmetric condition $u(x)=u(1-x)$ and $v(x)=v(1-x)$. As in figure 1, we choose again,  $u(x,0)=v(x,0)=0.5$ on $(0,0.5)$, $u(x,0)=v(x,0)=-0.5$ on $(0.5,1)$. This implies that $u_0(t)=\int_0^1u(x,t)dx=v_0(t)=\int_0^1v(x,t)=0$.  Note however that for this value of $\alpha$, $(0,0)$ is now a source for system \eqref{eq:luvhopfRD-E_k} with $k=1$. The first row illustrates $u(x,t)$ for $x\in (0,1)$ and $t=0.1$ (left), $t=95$ (right). We observe that the solution $u$ evolves non constantly in space with periodicity in time. The second row illustrates $v(x,t)$ for $x\in (0,1)$ and $t=0.1$ (left), $t=100$ (right). The last row shows the evolution in the time interval $(0,100)$ for fixed $x=0.02$ in red and $x=0.98$ in blue. }

\end{figure}

\begin{figure}
\includegraphics[scale=0.4]{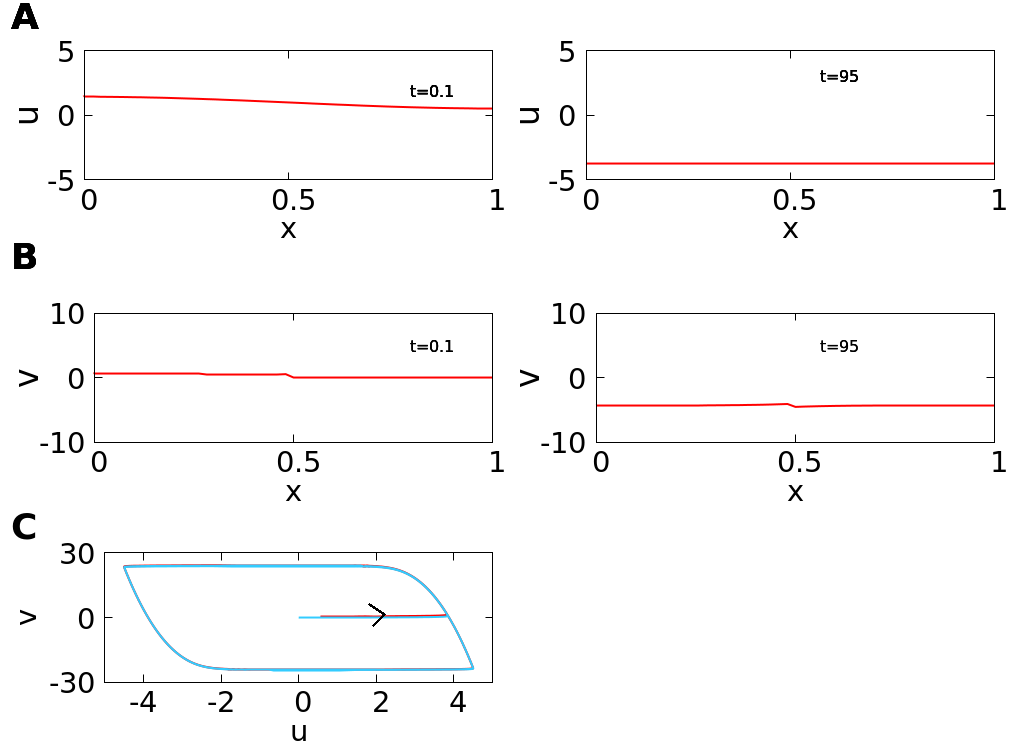}
\caption{The case: $\lambda_1<\alpha=15<\lambda_2$. IC do not satisfy the symmetric condition $u(x)=u(1-x)$ and $v(x)=v(1-x)$. We choose $u(x,0)=v(x,0)=0.5$ on $(0,0.5)$, $u(x,0)=v(x,0)=0$ on $(0.5,1)$ .   The first row illustrates $u(x,t)$ for $x\in (0,1)$ and $t=0.1$ (left), $t=95$ (right). We observe that the solution $u$ reaches a constant function in space. The picture in the last row indicates periodicity in time. The last row shows the evolution in the time interval $(0,100)$ for fixed $x=0.02$ in red and $x=0.98$ in blue. Note the difference with the amplitude of the limit cycle of figure 3; for these IC, $u_0(t)$ and $v_0(t)$ are no longer zero.  }
\end{figure}

\begin{figure}
\includegraphics[scale=0.4]{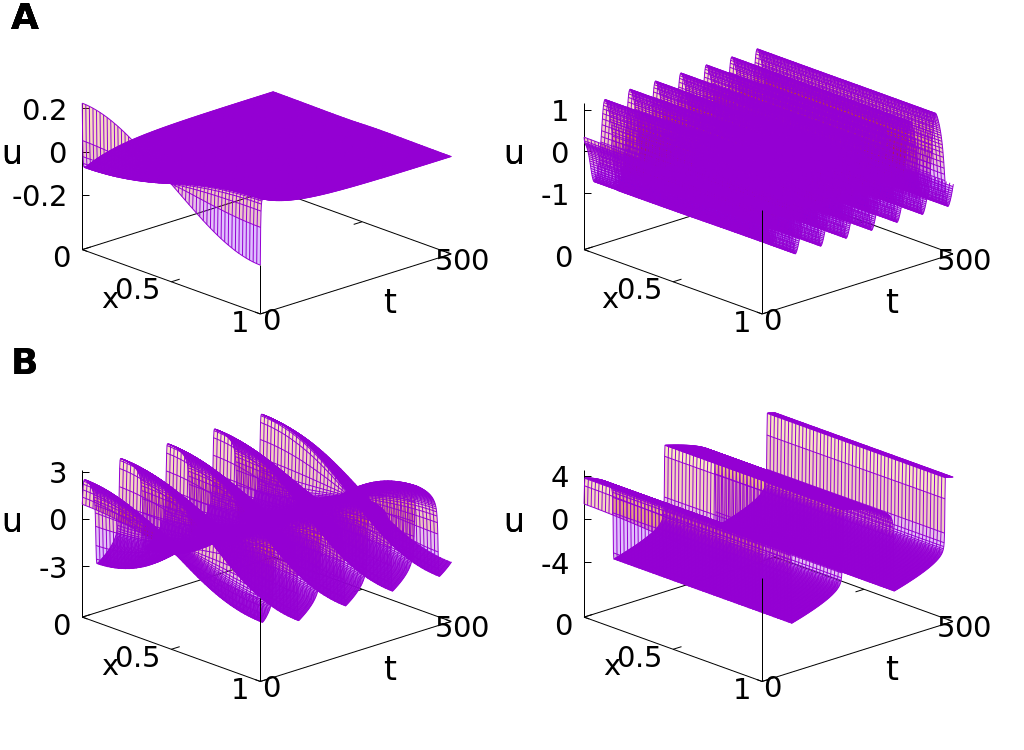}
\caption{This figure illustrates the behavior of $u(x,t)$. In each panel, we plot $u$ as a function of $x$ and $t$, for $(x,t)\in (0,1)\times(0,500)$. The solutions are those already illustrated in figures 1-4 with different plots. Illustrations in row A correspond to $0<\alpha=1<\lambda_1$, with zero integral (left) and non-zero integral (right) solutions. Illustrations in row B, correspond to $\lambda_1<\alpha=15<\lambda_2$ with zero integral (left) and non-zero integral (right) solutions. }
\end{figure}

\section{Analysis and discussion around the Nh-FHN model}

\subsection{Equation and Assumptions}
We now consider the system \eqref{FHNRD}:
\begin{equation*}
 \left \{ 
     \begin{array}{rcl}
      \epsilon u_t&=& f(u)-v +d u_{xx}, \,(x,t)\in (a,b)\times (0,+\infty)\\
      v_t&=& u -c(x)\\
     \end{array}
      \right. 
\end{equation*}
with $f(u)=-u^3+3u$, $\epsilon$ small, and Neumann Boundary conditions (NBC). We are interested in the case where the function $c$ take values lesser than $-1$ (excitable property for the diffusion-less system) on some parts of the domain $(a,b)$ and between $0$ and $1$ (oscillatory property for the diffusion-less system) on other parts of the domain. More specifically, we consider in this article a function $c$ which is oscillatory in a neighborhood of the center $\frac{a+b}{2}$ and excitable anywhere else; we further assume that excitability depends upon a given parameter $p>0$, see \cites{Amb-2009,Amb-2017}.   For sake of simplicity,  we assume that the function $c(x)$ (which depends on $p$) is regular and satisfies the following conditions:
\begin{eqnarray}
c(0.5(a+b))=0,& \,\label{eq:cl2}\\
\forall x \in (a,\frac{a+b}{2}),c'(x)>0 \, \forall x \in (\frac{a+b}{2},b), c'(x)<0 & \,\label{eq:cl3}\\
c'(a)=c'(b)=0,& \,\label{eq:cl4}\\
\forall x \in (a,b), x \neq 0, c(x) \mbox{ is a decreasing function of }p,& \, \label{eq:cl5}\\
\forall x \in (a,b), x \neq 0,\lim_{p\rightarrow 0}c(x)=0,& \,\label{eq:cl6}\\
\forall x \in (a,b), x \neq 0,\lim_{p\rightarrow +\infty}c(x)=-\infty.& \,\label{eq:cl7}
\end{eqnarray}

The first assumption \ref{eq:cl2} implies that the diffusion-less system at $x=0$ has a limit cycle. The second assumption \ref{eq:cl3} implies that $c$ admits a maximum at $x=0$, furthermore, it implies that the excitability strength decreases from the border toward the center.  The third assumption \ref{eq:cl4} is a compatibility condition with NBC. Finally, assumptions \ref{eq:cl5}-\ref{eq:cl7} relate to the strength of excitability with respect to $p$ for fixed $x$; the higher the value of $|c|$, the weaker the excitation's strength. 

The spectrum analysis of the linearized operator has been carried out in \cite{Amb-2017}.  We will  rely here on some of these results while following a different presentation. The stationary solution is given by:
\begin{equation}
\label{FHNRD1Dsta}
 \left \{ 
     \begin{array}{rcl}
       \bar{v}&=& f(\bar{u})+d \bar{u}_{xx},\\
      \bar{u}&=& c(x).\\
     \end{array}
      \right. 
\end{equation}
We rewrite  system \eqref{FHNRD1Dsta} around $(\bar{u},\bar{v})$. We obtain   

\begin{equation}
\label{FHNnh-2}
 \left \{ 
     \begin{array}{rcl}
      \epsilon u_t&=& f'(\bar{u})u+u_{xx}-v+0.5f''(\bar{u})u^2-u^3, \\
      v_t&=& u\\
     \end{array}
      \right. 
\end{equation}

We would like to proceed with projection on appropriate subspaces as in previous sections. To that end, we are interested in solutions of the following equation

\begin{equation}
\label{eq:SL}
f'(\bar{u})u+d u_{xx} =\lambda u
\end{equation}

Note that equation \eqref{eq:SL} is a regular Sturm-Liouville problem.
We have the classical following spectral theorem, see for example \cites{BookJost2013,BookTrudinger,BookTeschl}.
\begin{theorem}
\label{th:spec}
There exists an increasing sequence of real numbers $(\lambda_k)$ and an orthogonal basis $(\varphi)_{k\in \N}$ of $L^2(a,b)$ such that:
\begin{equation}
\begin{array}{rcl}
\label{EV}
d\varphi_{kxx}+f'(\bar{u})\varphi_k&=&-\lambda_k \varphi_k\\
\varphi_k'(a)=\varphi'_k(b)&=&0. 
\end{array}
\end{equation}
Furthermore, 
\[\lim_{k \rightarrow +\infty}\lambda_k=+\infty,\]
\begin{equation}
\label{Lam0}
\lambda_0 = \inf_{u \in H^2(a,b);\int_a^bu^2dx=1} d\int_a^bu_x^2dx-\int_a^b f'(\bar{u})u^2dx
\end{equation} 
\[\lambda_0\geq -3\]

\[\lambda_k=\frac{\pi^2k^2}{4a^2}+O(k)\]
(Weyl assymptotics)
\end{theorem}
The linearized system of \eqref{FHNnh-2} writes
\begin{equation}
\label{FHNnh-2-lin}
 \left \{ 
     \begin{array}{rcl}
      \epsilon u_t&=& f'(\bar{u})u+du_{xx}-v \\
      v_t&=& u.\\
     \end{array}
      \right. 
      \end{equation}
 and projection on the $kth$ subspace writes 
 \begin{equation}
\label{eq:Ek}
(E_k) \left \{ 
     \begin{array}{rcl}
      \epsilon u_{kt}&=& -\lambda_ku_k-v_k\\
      v_{kt}&=& u_k \\
     \end{array}
      \right. 
\end{equation}
The eigenvalues of the matrix associated to  $(E_k)$ are given by
\begin{equation}
\label{eq:eigenvalues}
 \left \{
 \begin{array}{rcl}
\sigma_1^k&=&\frac{1}{2\epsilon}\bigg(-\lambda_k-\sqrt{\lambda_k^2-4\epsilon} \bigg) \\
\sigma_2^k&=&\frac{1}{2\epsilon}\bigg(-\lambda_k+\sqrt{(\lambda_k^2-4\epsilon} \bigg)
\end{array}
\right.
\end{equation}     
The following result holds:
\begin{corollary}
\label{Prop:solinst}
 The number of eigenvalues with positive real part is finite while the number of eigenvalues with negative real part is infinite. If we assume that
\begin{equation}
\int_\Omega f'(\bar{u})dx>0,
\end{equation}
then  $\sigma_1^0$ and $\sigma_2^0$ have a positive real part. And,
\[\forall k\in \N, \, \forall i \in \{1,2\} \Re(\sigma_i^k)\leq \Re(\sigma_1^0)\leq\frac{3}{\epsilon}\]
\end{corollary}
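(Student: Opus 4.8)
The plan is to read off everything from the Sturm--Liouville spectral data in Theorem \ref{th:spec} together with the explicit formulas \eqref{eq:eigenvalues} for $\sigma_1^k,\sigma_2^k$ in terms of $\lambda_k$. First I would record the key monotonicity fact: the map $\lambda \mapsto \mathrm{Re}\,\sigma_i(\lambda)$ coming from the $2\times2$ matrix $\begin{pmatrix}-\lambda/\epsilon & -1/\epsilon\\ 1 & 0\end{pmatrix}$ has positive real part for both eigenvalues exactly when $\lambda<0$, zero real part when $\lambda=0$, and strictly negative real part when $\lambda>0$ (this is the analogue of Proposition 1; indeed the trace is $-\lambda/\epsilon$ and the determinant is $1/\epsilon>0$, so the sign of the real parts is governed entirely by the sign of $-\lambda$). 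Since by Theorem \ref{th:spec} the sequence $(\lambda_k)$ is increasing with $\lambda_k\to+\infty$, only finitely many $\lambda_k$ can be $\le 0$, hence only finitely many of the pairs $(\sigma_1^k,\sigma_2^k)$ have positive (or zero) real part, while infinitely many $\lambda_k>0$ give eigenvalues with strictly negative real part. That establishes the first sentence.

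Next, for the claim about $\sigma_1^0,\sigma_2^0$, I would invoke the variational characterization of $\lambda_0$ in \eqref{SL}: taking the constant test function $u\equiv (b-a)^{-1/2}$ (normalized in $L^2$) gives $\lambda_0 \le -\frac{1}{b-a}\int_a^b f'(\bar u)\,dx$. Under the hypothesis $\int_\Omega f'(\bar u)\,dx>0$ this yields $\lambda_0<0$, and by the monotonicity/sign analysis above both $\sigma_1^0$ and $\sigma_2^0$ then have positive real part. (One should note that $\Omega$ here is $(a,b)$.)

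For the final inequality $\mathrm{Re}(\sigma_i^k)\le \mathrm{Re}(\sigma_1^0)\le 3/\epsilon$: the first bound follows because $\lambda_0=\min_k \lambda_k$ and $\lambda\mapsto \mathrm{Re}\,\sigma_i(\lambda)$ is nonincreasing in $\lambda$ (for $\lambda$ in the range where the eigenvalues are complex it is the constant $-\lambda/(2\epsilon)$ up to sign of the discriminant; for real eigenvalues the larger root $\sigma_2$ is decreasing in $\lambda$, and on the overlap the values match), so all real parts are dominated by the value at $\lambda_0$; here one must check the elementary claim that $\sigma_1^0$ is the one realizing the maximum real part (when $\lambda_0^2<4\epsilon$ both roots have real part $-\lambda_0/(2\epsilon)$, when $\lambda_0^2\ge 4\epsilon$ and $\lambda_0<0$ it is the root with the $+\sqrt{\ }$, which the statement labels $\sigma_1^0$ up to the indexing convention). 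For the second bound, since $\lambda_0\ge -3$ (stated in Theorem \ref{th:spec}), we get $\mathrm{Re}(\sigma_1^0)\le -\lambda_0/\epsilon \le 3/\epsilon$ in the complex-eigenvalue regime, and a direct estimate of $\frac{1}{2\epsilon}(-\lambda_0+\sqrt{\lambda_0^2-4\epsilon})\le \frac{1}{2\epsilon}(-\lambda_0+|\lambda_0|) \le \frac{3}{\epsilon}$ (using $\lambda_0\ge -3$) in the real regime.

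The only genuinely delicate point is the monotonicity bookkeeping across the transition between complex and real eigenvalues — i.e. verifying cleanly that $\max_i \mathrm{Re}\,\sigma_i(\lambda)$ is a nonincreasing function of $\lambda\in\R$ and is maximized over the discrete set $\{\lambda_k\}$ at $\lambda_0$. This is a one-variable calculus check on the two branches $\tfrac{1}{2\epsilon}(-\lambda\pm\sqrt{\lambda^2-4\epsilon})$, continuous at $\lambda=\pm 2\sqrt\epsilon$, and I would dispatch it by cases rather than by a slick argument. Everything else is a direct substitution into \eqref{eq:eigenvalues} and an appeal to Theorem \ref{th:spec}.
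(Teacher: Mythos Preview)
Your approach is correct and is precisely the derivation the paper intends: the statement is labeled a \emph{Corollary} and the paper gives no proof, so it is meant to follow directly from Theorem \ref{th:spec} (the Sturm--Liouville data $\lambda_k\uparrow+\infty$, the variational formula for $\lambda_0$, and $\lambda_0\ge -3$) together with the explicit formulas \eqref{eq:eigenvalues}. Your use of the trace/determinant sign argument, the constant test function in the Rayleigh quotient, and the elementary bounds on $\tfrac{1}{2\epsilon}(-\lambda_0\pm\sqrt{\lambda_0^2-4\epsilon})$ are exactly the steps one expects.

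One point you flag is worth stating more sharply, because it is an inconsistency in the paper's statement rather than a gap in your argument: with the paper's convention \eqref{eq:eigenvalues}, $\sigma_1^k$ carries the $-\sqrt{\ \ }$ and $\sigma_2^k$ the $+\sqrt{\ \ }$, so in the real-eigenvalue regime ($\lambda_0^2\ge 4\epsilon$, which is the typical situation when $\lambda_0$ is near $-3$ and $\epsilon$ is small) one has $\Re(\sigma_2^0)>\Re(\sigma_1^0)$, and the inequality as written, $\Re(\sigma_i^k)\le \Re(\sigma_1^0)$, fails for $k=0$, $i=2$. The intended bound is clearly $\Re(\sigma_i^k)\le \max_i \Re(\sigma_i^0)\le 3/\epsilon$, and your monotonicity-in-$\lambda$ argument proves exactly that. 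So your proof is fine; just make explicit that the corollary should read $\sigma_2^0$ (or $\max_i\Re(\sigma_i^0)$) in place of $\sigma_1^0$.
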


The more elaborated theorem holds, see
\begin{theorem}
For $p$ small enough, $(0,0)$ is a source for $E_0$. For $p$ large enough, $(0,0)$ is a sink . There is an Hopf bifurcation:  there exists a value $p^*$ for which as $p$ crosses $p*$ from right to left,  the real part of $\sigma_1^0$ and $\sigma_2^0$ increases from negative to positive. The other eigenvalues remaining with negative real parts.
\end{theorem}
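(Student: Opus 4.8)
The plan is to reduce the whole statement to the monotone motion of the principal Sturm--Liouville eigenvalue $\lambda_0=\lambda_0(p)$ of \eqref{eq:SL} and then to read the eigenvalue crossing off the explicit formula for $\sigma_1^0,\sigma_2^0$. The block $(E_0)$ has associated matrix with trace $-\lambda_0/\epsilon$ and determinant $1/\epsilon>0$, so $(0,0)$ is a source for $(E_0)$ iff $\lambda_0<0$, a sink iff $\lambda_0>0$; moreover the pair $\sigma_{1,2}^0$ is complex conjugate as soon as $|\lambda_0|<2\sqrt\epsilon$, with $\Re\sigma_{1,2}^0=-\lambda_0/(2\epsilon)$, and at $\lambda_0=0$ it equals $\pm i/\sqrt\epsilon\neq0$ — a genuine, nonzero Hopf configuration. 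Since by Theorem \ref{th:spec} the $\lambda_k$ form a strictly increasing sequence, $\lambda_k(p)>\lambda_0(p)$ for every $k\geq1$ and every $p$, so whenever $\lambda_0(p)\geq0$ one automatically has $\lambda_k(p)>0$ and hence $\Re\sigma_i^k(p)<0$ for all $k\geq1$ (this is also what Corollary \ref{Prop:solinst} encodes). Thus it suffices to show that $p\mapsto\lambda_0(p)$ is $C^1$, is $<0$ for $p$ small, is $>0$ for $p$ large, and has positive derivative wherever it vanishes.

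For $p$ small I would simply invoke Corollary \ref{Prop:solinst}: by the hypotheses on $c(\cdot;p)$, $\int_a^b f'(\bar u)\,dx=\int_a^b 3\,(1-c_p^2)\,dx\to 3(b-a)>0$ as $p\to0$, so it is positive for small $p$ and the corollary gives that $(0,0)$ is a source for $(E_0)$, equivalently $\lambda_0(p)<0$. For $p$ large I would work directly on the variational characterisation $\lambda_0(p)=\inf\{\,d\int u_x^2-\int f'(\bar u)u^2:\ \|u\|_{L^2}=1\,\}$ from Theorem \ref{th:spec}. Fix a small interval $B_\delta$ about the centre $x_0=(a+b)/2$, let $\Omega_\delta$ be its complement, and set $m_p(\delta):=\min_{\Omega_\delta}|c_p|$, which by spatial monotonicity of $|c_p|$ and the pointwise blow-up hypothesis $c_p(x)\to-\infty$ tends to $+\infty$ as $p\to+\infty$. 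For a unit $u$: if $\int_{\Omega_\delta}u^2\geq\tfrac14$ then $d\int u_x^2-\int f'(\bar u)u^2\geq \tfrac34 m_p(\delta)^2-3$; if instead $\int_{\Omega_\delta}u^2<\tfrac14$, then $u$ concentrates on the short interval $B_\delta$, and a one-dimensional interpolation (Poincaré-type) inequality forces $\int u_x^2\gtrsim\delta^{-1}$, whence $d\int u_x^2-\int f'(\bar u)u^2\gtrsim d\,\delta^{-1}-3$. Choosing $\delta$ small and then $p$ large makes both lower bounds positive, so $\lambda_0(p)>0$ and $(0,0)$ is a sink; the same bookkeeping in fact gives $\lambda_0(p)\to+\infty$ (this may alternatively be quoted from the spectral analysis in \cite{Amb-2017}). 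Continuity of $\lambda_0$ and the intermediate value theorem then produce a $p^*$ with $\lambda_0(p^*)=0$.

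For the crossing direction I would use that $\lambda_0$ is a simple eigenvalue of a regular Sturm--Liouville problem depending smoothly on $p$, hence $C^1$ in $p$, with, by the Feynman--Hellmann formula,
\[
\frac{d\lambda_0}{dp}(p)=\int_a^b\partial_p\bigl(-f'(\bar u)\bigr)\,\varphi_0^2\,dx=\int_a^b 6\,c_p(x)\,\partial_p c_p(x)\,\varphi_0(x)^2\,dx,
\]
where $\varphi_0>0$ is the normalised ground state. Since $c_p\leq0$ and $\partial_p c_p\leq0$ (strictly for $x\neq x_0$) and $\varphi_0>0$, the integrand is $\geq0$ and not identically zero, so $\lambda_0'(p)>0$ for every $p>0$; in particular $\lambda_0$ is strictly increasing, $p^*$ is unique, $\lambda_0<0$ on $(0,p^*)$ and $\lambda_0>0$ on $(p^*,\infty)$. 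Near $p^*$ one has $|\lambda_0|<2\sqrt\epsilon$, hence $\Re\sigma_{1,2}^0=-\lambda_0(p)/(2\epsilon)$ and $\tfrac{d}{dp}\Re\sigma_{1,2}^0(p^*)=-\lambda_0'(p^*)/(2\epsilon)<0$: as $p$ decreases through $p^*$ the real part of $\sigma_1^0,\sigma_2^0$ increases through $0$ while $\sigma_{1,2}^0(p^*)=\pm i/\sqrt\epsilon\neq0$, and by continuity every $\sigma_i^k$ with $k\geq1$ keeps a negative real part in a neighbourhood of $p^*$, since $\lambda_k(p^*)>0$. This is exactly the announced Hopf scenario; the attendant bifurcation of periodic orbits follows from the center-manifold / finite-dimensional reduction near $p^*$ legitimised by Theorem \ref{th:NLHopf-stab} and Remark \ref{re:Inf-Fi}.

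The genuinely delicate step is the lower bound $\lambda_0(p)>0$ for large $p$: the effective potential $-f'(\bar u)=3(c_p^2-1)$ develops, as $p$ grows, a well of bounded depth (at most $3$) that narrows around $x_0$, and one must show the kinetic cost $d\int u_x^2$ of localising there eventually beats the depth — a localisation (semiclassical) estimate, for which the elementary splitting above suffices. Everything else is $2\times2$ block bookkeeping on the systems $(E_k)$ of the present section together with direct use of the monotonicity hypotheses on $c(\cdot;p)$.
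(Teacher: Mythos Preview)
Your argument is correct and is precisely the mechanism the paper has in mind. The paper does not actually prove this theorem: after the statement there is only a remark saying that the result ``relies on the analysis of the Sturm--Liouville problem \eqref{eq:SL}'' and that ``the first Hopf bifurcation occurs when $\lambda_0$ becomes negative'', with the spectral analysis delegated to \cite{Amb-2017}. Your reduction to the sign of $\lambda_0(p)$ via the trace/determinant of the $2\times2$ block $(E_0)$, together with $\lambda_k>\lambda_0$ for $k\geq1$ to keep the remaining $\sigma_i^k$ in the left half-plane, is exactly that outline.

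Where you go beyond the paper is in supplying the two ingredients it leaves to the citation: (i) the lower bound $\lambda_0(p)>0$ for large $p$, which you obtain by the elementary splitting ``either mass sits in $\Omega_\delta$ and the potential wins, or mass concentrates on $B_\delta$ and the gradient term wins via $\int u_x^2\gtrsim\delta^{-1}$'' (this is indeed just the fundamental theorem of calculus plus Cauchy--Schwarz between a high point in $B_\delta$ and a low point in $\Omega_\delta$); and (ii) the transversality and uniqueness of $p^*$, which you get from the Feynman--Hellmann identity $\lambda_0'(p)=6\int_a^b c_p\,(\partial_p c_p)\,\varphi_0^2\,dx>0$, using $c_p\leq0$, $\partial_p c_p\leq0$ and $\varphi_0>0$. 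Neither of these appears in the present paper, so your write-up is strictly more complete while remaining on the same track.
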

\begin{remark}
This theorem relies on the analysis of the Sturm-Liouville problem \eqref{eq:SL}. The first Hopf bifurcation occurs when $\lambda_0$ becomes negative. Successive Hopf bifurcations occur when the the successive $\lambda_k$ become negative. The number of possible Hopf bifurcation is bounded. The limit-case is given by $c(x)=0$.
\end{remark}

\subsection{Discussion and Comparison with the Toy model}
For the toy model, we provided the following results:
\begin{enumerate}
    \item  existence of a cascade of Hopf bifurcations
    \item existence of solutions converging toward the unstable stationary point $(0, 0)$ for $0<\alpha<\lambda_1$
    \item a local stability result for $0<\alpha<\lambda_1$.
\end{enumerate}
 The point 1 resulted from the spectral analysis of the linearized operator. The
points 2 and 3 were possible thanks to the property that  $\int \varphi_0\varphi_k=0$. Thanks to that property, despite the nonlinearity, the first projection spanned by $\varphi_0$ can be controlled in the others
subspaces. The symmetry of the reaction term was also
 necessary for the point 2. Dealing with Nh-FHN, one lose the points 2 and 3.
The point 1, i.e. the spectral analysis of the linearized operator remains partially
valid and gives good insights. Since an infinite number of eigenvalues is negative, looking at the behavior through the
 projection into a finite space spanned by the eigenfunctions of the Laplacian provides an interesting way to
gain qualitative insights. Beyond that natural perspective, a question that arises is the existence of initial conditions leading to the convergence toward the stationary solution $(c,f(c))$ when the first Hopf bifurcation arises. Such a result was proven for the toy model. These questions are to be addressed in a forthcoming article.

\subsection{Numerical simulations}
For the numerical simulations, we set $a=-1$, $b=1$, $d=1$ and 
\begin{equation*}
c(x)=p(x^4-2x^2).
\end{equation*}
Note that $c(0)=0$ which corresponds to oscillatory behavior for the ODE system, while $c(a)=c(b)=-p<-1$ which corresponds to a stable steady state for the ODE. We simulate equation \eqref{FHNRD} on $(a,b)$ with an explicit scheme of Runge-Kutta 4 type, with a time step of $10^{-5}$. The value of  $\epsilon$ is set to $0.1$.
Note that the dynamics depend on the size of the domain and the diffusion coefficient $d$. Here, we illustrate numerical simulations with these parameters fixed. We vary the parameter $p$.  We plan to proceed to a further qualitative numerical study in a forthcoming article. Note also that some complementary results have been obtained for a chain with discrete kicks in \cite{Amb-2020-2}.  We illustrate here solutions for three values of the parameter $p$.  If $p=2.2$, the solution converges toward a stationary solution; see figures 6 and 9. As $p$ is decreased, we observe waves. The amplitude of the waves depend on the parameter $p$, see figures 7,8,9 ($p=2$ and $p=1.9$).

\begin{center}
\begin{figure}
\includegraphics[scale=0.4]{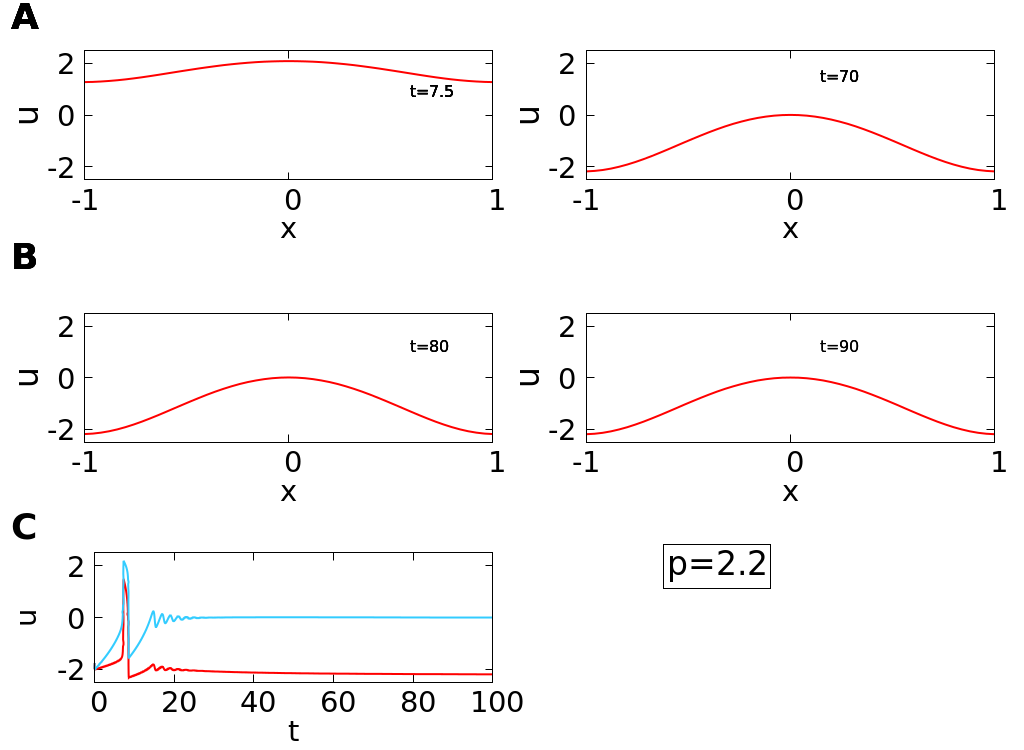}

\caption{Simulations of system \eqref{FHNRD} with $p=2.2$.  IC are set to a constant $(-1.5,2)$.  The first row illustrates $u(x,t)$ for $x\in (-1,1)$ and $t=7.5$ (left), $t=70$ (right).  The second row illustrates $u(x,t)$ for $x\in (-1,1)$ and $t=80$ (left), $t=90$ (right). The last row represents $u(x,t)$ for $t \in (0,100)$ and and two fixed values of $x$ ($x=-1$ in red,  and $x=0$ in blue). These numerical illustrations suggest that the solution evolves toward the stationary solution $(c,f(c)+c_{xx})$.  }

\end{figure}
\end{center}

\begin{center}
\begin{figure}
\includegraphics[scale=0.4]{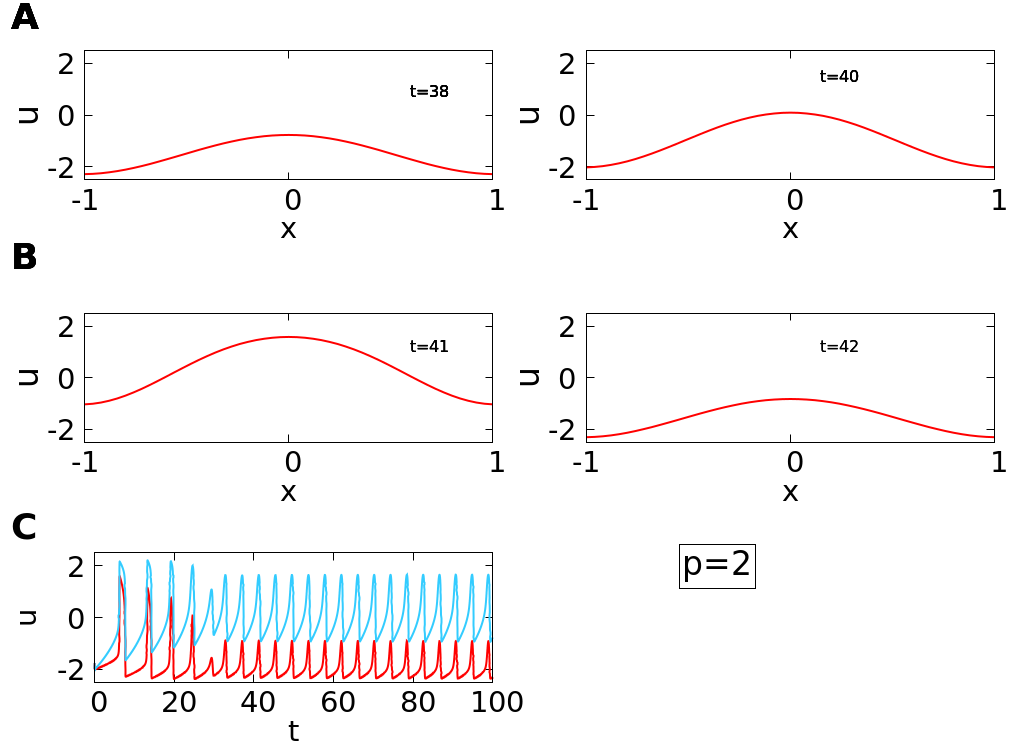}

\caption{Simulations of system \eqref{FHNRD} with $p=2.0$.  IC are set to a constant $(-1.5,2)$.  The first row illustrates $u(x,t)$ for $x\in (-1,1)$ and $t=38$ (left), $t=40$ (right).  The second row illustrates $u(x,t)$ for $x\in (-1,1)$ and $t=41$ (left), $t=42$ (right).  The last row represents $u(x,t)$ for $t \in (0,100)$ and and two fixed values of $x$ ($x=-1$ in red,  and $x=0$ in blue). These numerical illustrations suggest that the solution evolves toward a periodic solution in time. The first two rows A and B correspond approximately to snapshots along a time period.}

\end{figure}
\end{center}
\begin{center}
\begin{figure}
\includegraphics[scale=0.4]{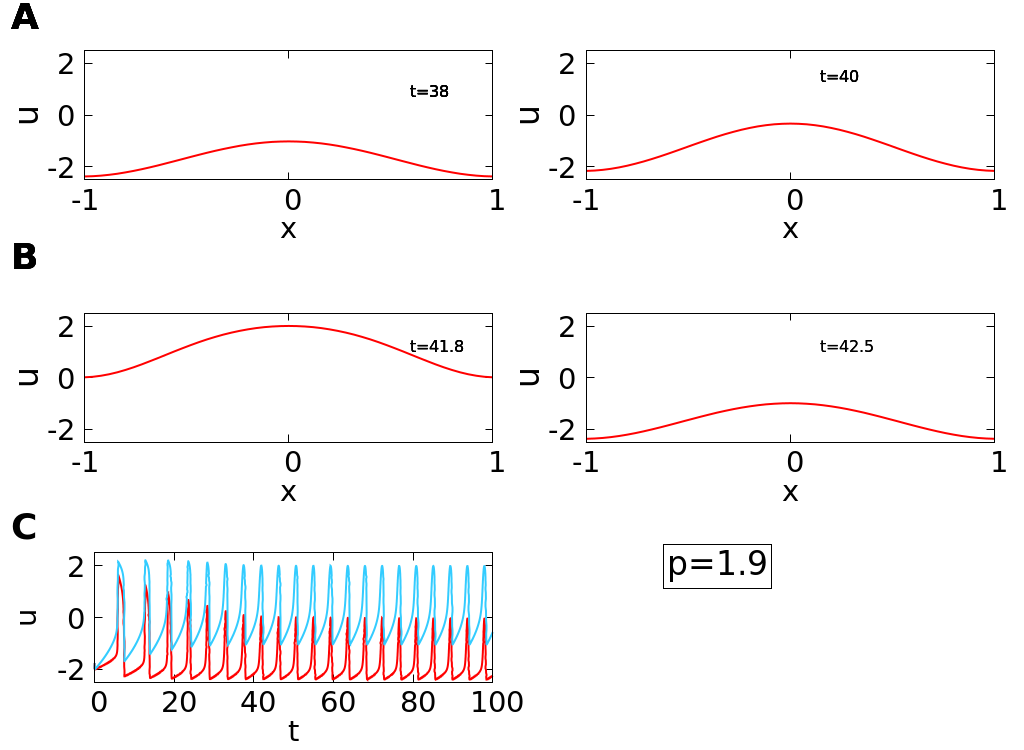}

\caption{Simulations of system \eqref{FHNRD} with $p=2.0$.  IC are set to a constant $(-1.5,2)$.  The first row illustrates $u(x,t)$ for $x\in (-1,1)$ and $t=38$ (left), $t=40$ (right).  The second row illustrates $u(x,t)$ for $x\in (-1,1)$ and $t=41.8$ (left), $t=42.5$ (right).  The last row represents $u(x,t)$ for $t \in (0,100)$ and and two fixed values of $x$ ($x=-1$ in red,  and $x=0$ in blue). These numerical illustrations suggest that the solution evolves toward a periodic solution in time. The first two rows A and B correspond approximately to snapshots along a time period. Note the difference of amplitude and frequency with solutions represented in figure 7. }

\end{figure}
\end{center}

\begin{center}
\begin{figure}
\includegraphics[scale=0.4]{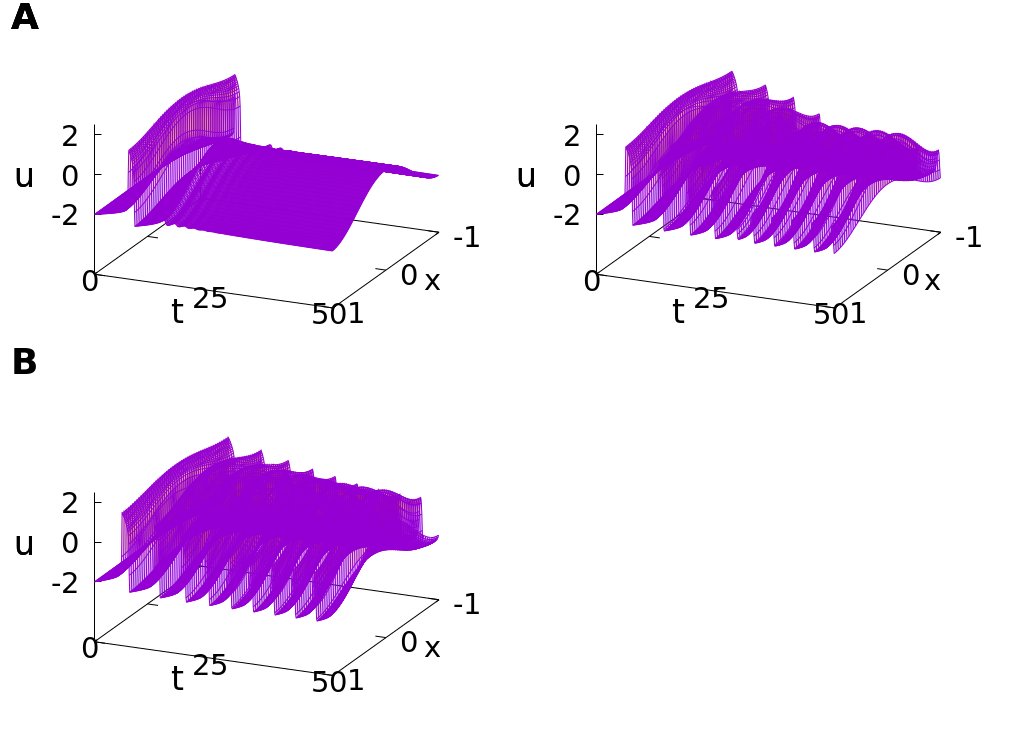}

\caption{Simulations of system \eqref{FHNRD} with $p=2.2$ (panel A left side), $p=2$ (panel A right side) and $p=1.9$ (panel B).  IC are set to the constant $(-1.5,2)$.  In each panel, we plot $u$ as a function of $x$ and $t$, for $(x,t)\in (-1,1)\times(0,50)$.  }

\end{figure}
\end{center}

\section{Concluding remarks and Perspectives}
In this article, we studied a few RD systems with a cubic nonlinearity arising from a Neuroscience context. We provided significant theoretical and numerical qualitative analysis of such infinite-dimensional systems highlighting dynamics arising under the stable regime and near  Hopf-Bifurcations. We also emphasized the dependence on initial conditions. As for the nature of the solutions observed, our work relates with previous studies on traveling waves. Our approach is however different by may aspects as this was discussed in introduction. As for the biological motivation, it goes back to the HH RD model and the oscillatory and excitability properties featured in the FitzHugh-Nagumo model. Our first contribution was to consider a non-homogeneous FHN model (Nh-FHN) as simple as possible. We introduced the inhomogeneity with a function $c$ depending upon the space variable $x$. This dependence allows to introduce both excitable and oscillatory properties within the single PDE. Note that in this context, the function $c(x)$  indicates the degree of excitability at the point $x$. Naturally, an immediate application is to provide a model for propagation of waves trough the axon of a single neuron, as this was the case in the original HH model. More generally, Nh-FHN is a model for electrical wave propagation along excitable tissue. It is worth noting that RD excitable systems derivated from FHN type systems have played an important role to model the electrical activity in heart. Spirals, scroll-waves and patterns have been used to characterize arrhythmias; see for example \cites{Maj2011,Pra2015,Prav2021} and references therein cited. There is also a large literature about numerical models used to develop techniques of low voltage fields to induce defibrillation. Those are based on RD ionic current models, see for example \cites{Are2007,Tra2014}.  The study of waves, spirals and patterns of electrical cortical activity has also been a very active field of research this recent years, see for example \cites{Mul2018,Mey2022}. In this context, a better knowledge of the underlying mathematical mechanisms
from which waves arise appears to be relevant. Here, we illustrated for example how waves may or may not exist depending on the initial conditions. Other simulations not illustrated here show also that the size of the domain an the diffusivity has an impact on the qualitative dynamics of the observed waves. 
In forthcoming articles, we plan to investigate further these phenomena. This includes the comparison between the solutions observed for the full PDE and finite dimensional solutions, a systematic description of the qualitative dynamics of the observed waves and how the results provided here extend to higher space dimensions.

\section*{Supplementary material}
The codes used for simulations are available at https://github.com/benjamin-ambrosio/EvEqCoTh2023ToyModel and https://github.com/benjamin-ambrosio/EvEqCoTh2023NhFHN 
\section*{Acknowledgments}
I would like to thank Region Normandie France, the 
ERDF (European Regional Development Fund)  project 
XTERM, IEA CNRS project  IEA00134 for funding.   

\bibliography{Ambrosioetal}

@phdthesis{Amb-2009-thesis,
  TITLE = {{Wave Propagation in an excitable medium. Numerical simulations and analytical approach}},
  AUTHOR = {Ambrosio, Benjamin},
  URL = {https://tel.archives-ouvertes.fr/tel-00437402},
  SCHOOL = {{Universit{\'e} Pierre et Marie Curie - Paris VI}},
  YEAR = {2009},
  MONTH = Apr,
  KEYWORDS = {PDE ; Wave Equation ; attractors ; EDP ; Reaction Diffusion ; Equation des ondes ; Bifurcations ; attracteurs},
  TYPE = {Theses},
  PDF = {https://tel.archives-ouvertes.fr/tel-00437402/file/theseBA.pdf},
  HAL_ID = {tel-00437402},
  HAL_VERSION = {v1},
}

@book{BookCaz1998,
 Author = {T. Cazenave and A. Haraux},
 title = {An introduction to semilinear evolution equations},
 year = {1998},
 publisher = {Oxford Press},
 isbn = {978-0198502777}
}

@book{BookDay2001,
 Author = {Peter Dayan and L.F. Abott},
 title = {Theoretical Neuroscience: Computational and Mathematical Modeling of Neural Systems},
 description = {Theoretical Neuroscience: Computational and Mathematical Modeling of Neural Systems (Book, 2001)},
 publisher = {The MIT Press},
 interhash = {f9933a01f93ba9625db1106e929922d7},
 intrahash = {ff597423e64ca60ef10e91a6754ff4ec},
 year = {2001},
 month = {dec},
 isbn = {0262041995},
 url = {https://www.xarg.org/ref/a/0262041995/}
}

@book{BookErm2010,
  doi = {10.1007/978-0-387-87708-2},
  url = {https://doi.org/10.1007/978-0-387-87708-2},
  year  = {2010},
  publisher = {Springer New York},
  author = {G. Bard Ermentrout and David H. Terman},
  title = {Mathematical Foundations of Neuroscience}
}

@book{BookIzh2006,
 Author = {Eugene M. Izhikevich},
 title = {Dynamical Systems in Neuroscience: The Geometry of Excitability and Bursting (Computational Neuroscience)},
 description = {Dynamical Systems in Neuroscience: The Geometry of Excitability and Bursting (Computational Neuroscience) (Book, 2006)},
 publisher = {The MIT Press},
 interhash = {d37d04f45167a5d9986f9ccaccd97924},
 intrahash = {c853f4a35bf40d01d1fa66556e18812a},
 year = {2006},
 month = {nov},
 isbn = {0262090430},
 url = {https://www.xarg.org/ref/a/0262090430/}
}

@book{BookJLL,
 Author = {J-L. Lions},
 title = {Quelques m\'ethodes de r\'esolution des probl\`emes aux limites non lin\'eaires},
 publisher = {Dunod},
 year = {1969},
}

@Book{ BookTeschl,
	author = "G. Teschl",
	publisher = "American Mathematical Society",
	title = "Ordinary differential equations and dynamical systems",
	year = "2012"
}

@Book{ BookTrudinger,
	author = "D. Gilbarg and N.S. Trudinger ",
	publisher = "Springer",
	title = "Elliptic partial differential equations of second order",
	year = "2001"
}

@book{BookJost2013,
  doi = {10.1007/978-1-4614-4809-9},
  url = {https://doi.org/10.1007/978-1-4614-4809-9},
  year = {2013},
  publisher = {Springer New York},
  author = {J\"{u}rgen Jost},
  title = {Partial Differential Equations}
}

@book{BookRobinson,
  Author = {James C. Robinson},
  Title = {Infinite-Dimensional Dynamical Systems: An Introduction to Dissipative Parabolic PDEs and the Theory of Global Attractors (Cambridge Texts in Applied Mathematics)},
  Publisher = {Cambridge University Press},
  Year = {2001},
  ISBN = {0521635640},
  URL = {https://www.amazon.com/Infinite-Dimensional-Dynamical-Systems-Introduction-Dissipative/dp/0521635640?SubscriptionId=AKIAIOBINVZYXZQZ2U3A&tag=chimbori05-20&linkCode=xm2&camp=2025&creative=165953&creativeASIN=0521635640}
}

@Book{ BookRothe,
	author = "F. Rothe",
	publisher = "Springer",
	title = "Global Solutions of Reaction-Diffusion Systems",
	year = "1984"
}

@Book{ BookTemam,
	author = "R. Temam",
	publisher = "Springer",
	title = "Infinite Dynamical Systems in Mechanics and Physics",
	year = "1988"
}

@book{BookVolpert1994,
  doi = {10.1090/mmono/140},
  url = {https://doi.org/10.1090/mmono/140},
  year = {1994},
  month = oct,
  publisher = {American Mathematical
		                    Society},
  author = {Aizik Volpert and Vitaly Volpert and Vladimir Volpert},
  title = {Traveling Wave Solutions of Parabolic
		                    Systems}
}

@article{Amb-2009,
  doi = {10.1098/rsta.2009.0143},
  year  = {2009},
  month = {nov},
  publisher = {The Royal Society},
  volume = {367},
  number = {1908},
  pages = {4863--4875},
  author = {B. Ambrosio and J.-P. Francoise},
  title = {Propagation of bursting oscillations},
  journal = {Philosophical Transactions of the Royal Society A: Mathematical,  Physical and Engineering Sciences}
}

@article{Amb-2016,
  doi = {10.1007/s10441-016-9294-z},
  url = {https://doi.org/10.1007/s10441-016-9294-z},
  year  = {2016},
  month = {oct},
  publisher = {Springer Nature},
  volume = {64},
  number = {4},
  pages = {311--325},
  author = {B. Ambrosio and M. A. Aziz-Alaoui},
  title = {Basin of Attraction of Solutions with Pattern Formation in Slow{\textendash}Fast Reaction{\textendash}Diffusion Systems},
  journal = {Acta Biotheoretica}
}

@article{Amb-2017,
  doi = {10.1142/s0218127417500651},
  year  = {2017},
  month = {may},
  publisher = {World Scientific Pub Co Pte Lt},
  volume = {27},
  number = {05},
  pages = {1750065},
  author = {Benjamin Ambrosio},
  title = {Hopf Bifurcation in an Oscillatory-Excitable Reaction{\textendash}Diffusion Model with Spatial Heterogeneity},
  journal = {International Journal of Bifurcation and Chaos}
}

@article{Amb-2020-2,
  doi = {10.1007/s11071-022-07757-0},
  url = {https://doi.org/10.1007/s11071-022-07757-0},
  year = {2022},
  month = aug,
  publisher = {Springer Science and Business Media {LLC}},
  volume = {110},
  number = {3},
  pages = {2805--2829},
  author = {Benjamin Ambrosio and Stanislav M. Mintchev},
  title = {Periodically kicked feedforward chains of simple excitable {FitzHugh}{\textendash}Nagumo neurons},
  journal = {Nonlinear Dynamics}
}

@Article{Amb-2014,
title = {Weakly coupled two slow- two fast systems, folded singularities and mixed mode oscillations},
author = {M. Krupa and B. Ambrosio and M-A. Aziz-Alaoui},
journal = {Nonlinearity},
year = {2014},
volume={27},
pages = {1555-1574},
month={July}
}

@article{Bar2018,
  doi = {10.1098/rsta.2017.0184},
  url = {https://doi.org/10.1098/rsta.2017.0184},
  year = {2018},
  month = mar,
  publisher = {The Royal Society},
  volume = {376},
  number = {2117},
  pages = {20170184},
  author = {B. Barker and J. Humpherys and G. Lyng and J. Lytle},
  title = {Evans function computation for the stability of travelling waves},
  journal = {Philosophical Transactions of the Royal Society A: Mathematical,  Physical and Engineering Sciences}
}

@article{Bez2017,
  doi = {10.1063/1.4999472},
  url = {https://doi.org/10.1063/1.4999472},
  year = {2017},
  month = sep,
  publisher = {{AIP} Publishing},
  volume = {27},
  number = {9},
  pages = {093916},
  author = {B. Bezekci and V. N. Biktashev},
  title = {Fast-slow asymptotic for semi-analytical ignition criteria in {FitzHugh}-Nagumo system},
  journal = {Chaos: An Interdisciplinary Journal of Nonlinear Science}
}

@article {Bon-1948,
	author = { K.F. Bonhoeffer},
	title = {Activation of passive iron as a model for the excitation of nerve},
	journal = {J. Gen. Physiol.},
	volume = {32},
	pages = {69-91},
	year = {1948}
}

@article{Car2015,
  doi = {10.1137/140999177},
  url = {https://doi.org/10.1137/140999177},
  year = {2015},
  month = jan,
  publisher = {Society for Industrial {\&} Applied Mathematics ({SIAM})},
  volume = {47},
  number = {5},
  pages = {3393--3441},
  author = {Paul Carter and Bj\"{o}rn Sandstede},
  title = {Fast Pulses with Oscillatory Tails in the {FitzHugh}--Nagumo System},
  journal = {{SIAM} Journal on Mathematical Analysis}
}

@article{Car2016,
  doi = {10.1007/s00332-016-9308-7},
  url = {https://doi.org/10.1007/s00332-016-9308-7},
  year = {2016},
  month = may,
  publisher = {Springer Science and Business Media {LLC}},
  volume = {26},
  number = {5},
  pages = {1369--1444},
  author = {Paul Carter and Bj\"{o}rn de Rijk and Bj\"{o}rn Sandstede},
  title = {Stability of Traveling Pulses with Oscillatory Tails in the {FitzHugh}{\textendash}Nagumo System},
  journal = {Journal of Nonlinear Science}
}

@article{Car2018,
  doi = {10.1137/16m1080707},
  url = {https://doi.org/10.1137/16m1080707},
  year = {2018},
  month = jan,
  publisher = {Society for Industrial {\&} Applied Mathematics ({SIAM})},
  volume = {17},
  number = {1},
  pages = {236--349},
  author = {Paul Carter and Bj\"{o}rn Sandstede},
  title = {Unpeeling a Homoclinic Banana in the {FitzHugh}--Nagumo System},
  journal = {{SIAM} Journal on Applied Dynamical Systems}
}

@article{Cor2018,
  doi = {10.1137/17m1149432},
  url = {https://doi.org/10.1137/17m1149432},
  year = {2018},
  month = jan,
  publisher = {Society for Industrial {\&} Applied Mathematics ({SIAM})},
  volume = {17},
  number = {1},
  pages = {754--787},
  author = {Paul Cornwell and Christopher K. R. T. Jones},
  title = {On the Existence and Stability of Fast Traveling Waves in a Doubly Diffusive {FitzHugh}--Nagumo System},
  journal = {{SIAM} Journal on Applied Dynamical Systems}
}

@article{Erm1996,
  doi = {10.1137/s0036139994276793},
  url = {https://doi.org/10.1137/s0036139994276793},
  year = {1996},
  month = aug,
  publisher = {Society for Industrial {\&} Applied Mathematics ({SIAM})},
  volume = {56},
  number = {4},
  pages = {1107--1128},
  author = {G. Bard Ermentrout and John Rinzel},
  title = {Reflected Waves in an Inhomogeneous Excitable Medium},
  journal = {{SIAM} Journal on Applied Mathematics}
}

@article{Eva1971,
  doi = {10.1512/iumj.1973.22.22048},
  url = {https://doi.org/10.1512/iumj.1973.22.22048},
  year = {1971},
  publisher = {Indiana University Mathematics Journal},
  volume = {21},
  pages = {877--885},
  author = {John Evans},
  title = {Nerve Axon Equations, I:  Linear approximations.},
  journal = {Indiana University Mathematics Journal}
}

@article{Eva1972a,
  year = {1972},
  publisher = {Indiana University Mathematics Journal},
  volume = {22},
  pages = {75--95},
  author = {John Evans},
  title = {Nerve Axon Equations, II:  Stability at rest.},
  journal = {Indiana University Mathematics Journal}
}

@article{Eva1972b,
  doi = {10.1512/iumj.1973.22.22048},
  url = {https://doi.org/10.1512/iumj.1973.22.22048},
  year = {1972},
  publisher = {Indiana University Mathematics Journal},
  volume = {22},
  number = {6},
  pages = {577--593},
  author = {John Evans},
  title = {Nerve Axon Equations, III: Stability of the Nerve Impulse},
  journal = {Indiana University Mathematics Journal}
}

@article{Eva1975,
  doi = {10.1512/iumj.1975.24.24096},
  url = {https://doi.org/10.1512/iumj.1975.24.24096},
  year = {1975},
  publisher = {Indiana University Mathematics Journal},
  volume = {24},
  number = {12},
  pages = {1169},
  author = {John Evans},
  title = {Nerve axon equations, IV: The stable and unstable impulse},
  journal = {Indiana University Mathematics Journal}
}

@article{Fis1937,
  doi = {10.1111/j.1469-1809.1937.tb02153.x},
  url = {https://doi.org/10.1111/j.1469-1809.1937.tb02153.x},
  year = {1937},
  month = jun,
  publisher = {Wiley},
  volume = {7},
  number = {4},
  pages = {355--369},
  author = {R. A. Fisher},
  title = {The wave of advance of advantageous genes},
  journal = {Annals of Eugenics}
}

@article{Fit-1961,
  doi = {10.1016/s0006-3495(61)86902-6},
  year  = {1961},
  month = {jul},
  publisher = {Elsevier {BV}},
  volume = {1},
  number = {6},
  pages = {445--466},
  author = {Richard FitzHugh},
  title = {Impulses and Physiological States in Theoretical Models of Nerve Membrane},
  journal = {Biophysical Journal}
}

@article{HH,
  doi = {10.1113/jphysiol.1952.sp004764},
  url = {https://doi.org/10.1113/jphysiol.1952.sp004764},
  year  = {1952},
  month = {aug},
  publisher = {Wiley},
  volume = {117},
  number = {4},
  pages = {500--544},
  author = {A. L. Hodgkin and A. F. Huxley},
  title = {A quantitative description of membrane current and its application to conduction and excitation in nerve},
  journal = {The Journal of Physiology}
}

@article{Jon1984,
  title={Stability of the travelling wave solution of the FitzHugh-Nagumo system},
  author={Christopher K. R. T. Jones},
  journal={Trans. Amer. Math. Soc.},
  volume={286},
  number={2},
  pages={431-469},
  year={1984},
  publisher={AMS}
}

@article{Kal1993,
  doi = {10.1007/bf00171222},
  url = {https://doi.org/10.1007/bf00171222},
  year = {1993},
  month = jan,
  publisher = {Springer Science and Business Media {LLC}},
  volume = {31},
  number = {2},
  author = {LeonidV. Kalachev},
  title = {A relaxation wave solution of the {FitzHugh}-Nagumo equations},
  journal = {Journal of Mathematical Biology}
}

@article{Kol1937,
  year = {1937},
  volume = {1},
  number = {1},
  pages = {291--328},
  author = {A. Kolmogorov and I. Petrovsky and N. Piscounov},
  title = {\'Etude de l’\'equation de la diffusion avec
croissance de la quantité de matière et son application \`a un probl\`eme biologique},
  journal = {Moscow University
Bulletin of Mathematics}
}

@article{McK1970,
  doi = {10.1016/0001-8708(70)90023-x},
  url = {https://doi.org/10.1016/0001-8708(70)90023-x},
  year = {1970},
  month = jun,
  publisher = {Elsevier {BV}},
  volume = {4},
  number = {3},
  pages = {209--223},
  author = {H.P McKean},
  title = {Nagumo{\textquotesingle}s equation},
  journal = {Advances in Mathematics}
}

@article{McK1975,
  doi = {10.1002/cpa.3160280302},
  url = {https://doi.org/10.1002/cpa.3160280302},
  year = {1975},
  month = may,
  publisher = {Wiley},
  volume = {28},
  number = {3},
  pages = {323--331},
  author = {H. P. McKean},
  title = {Application of brownian motion to the equation of kolmogorov-petrovskii-piskunov},
  journal = {Communications on Pure and Applied Mathematics}
}

@Article{Marion,
title = {Finite-Dimensionnal attractors associated with partly dissipative reaction-diffusion systems},
author = {M. Marion},
journal = {SIAM J. Math. Anal.},
year = {1989},
volume ={20},
pages = {816-844},
}

@Article{Nagumo,
title = { An active pulse transmission line simulating nerve axon},
author = {J. Nagumo and S.  Arimoto and S. Yoshizawa },
journal = {Proc. IRE.},
year = {1962},
volume ={50},
pages = {2061-2070},
}

@article{Rau1978,
  doi = {10.1016/0001-8708(78)90075-0},
  url = {https://doi.org/10.1016/0001-8708(78)90075-0},
  year = {1978},
  month = jan,
  publisher = {Elsevier {BV}},
  volume = {27},
  number = {1},
  pages = {12--44},
  author = {Jeffrey Rauch and Joel Smoller},
  title = {Qualitative theory of the {FitzHugh}-Nagumo equations},
  journal = {Advances in Mathematics}
}

@article{Rin1973,
  doi = {10.1016/s0006-3495(73)86065-5},
  url = {https://doi.org/10.1016/s0006-3495(73)86065-5},
  year = {1973},
  month = dec,
  publisher = {Elsevier {BV}},
  volume = {13},
  number = {12},
  pages = {1313--1337},
  author = {John Rinzel and Joseph B. Keller},
  title = {Traveling Wave Solutions of a Nerve Conduction Equation},
  journal = {Biophysical Journal}
}

@article{Rin1975,
  doi = {10.1016/s0006-3495(75)85878-4},
  url = {https://doi.org/10.1016/s0006-3495(75)85878-4},
  year = {1975},
  month = oct,
  publisher = {Elsevier {BV}},
  volume = {15},
  number = {10},
  pages = {975--988},
  author = {J. Rinzel},
  title = {Spatial stability of traveling wave solutions of a nerve conduction equation},
  journal = {Biophysical Journal}
}

@article{Rin1982a,
  doi = {10.1137/0142077},
  url = {https://doi.org/10.1137/0142077},
  year = {1982},
  month = oct,
  publisher = {Society for Industrial {\&} Applied Mathematics ({SIAM})},
  volume = {42},
  number = {5},
  pages = {1111--1137},
  author = {John Rinzel and David Terman},
  title = {Propagation Phenomena in a Bistable Reaction-Diffusion System},
  journal = {{SIAM} Journal on Applied Mathematics}
}

@Article{VDP26,
title = { On relaxation-oscillations},
author = {B. van der Pol},
journal = {The London, Edinburgh, and Dublin Philosophical Magazine and Journal of Science},
year = {1926},
volume ={7},
pages = {978-992},
month={}
}

@incollection{Jon1991,
  doi = {10.1007/978-1-4612-3206-3_7},
  url = {https://doi.org/10.1007/978-1-4612-3206-3_7},
  year = {1991},
  publisher = {Springer New York},
  pages = {101--115},
  author = {C. Jones and N. Kopell and R. Langer},
  title = {Construction of the Fitzhugh-Nagumo Pulse Using Differential Forms},
  booktitle = {Patterns and Dynamics in Reactive Media}
}

@incollection{San2002,
  doi = {10.1016/s1874-575x(02)80039-x},
  url = {https://doi.org/10.1016/s1874-575x(02)80039-x},
  year = {2002},
  publisher = {Elsevier},
  pages = {983--1055},
  author = {Bj\"{o}rn Sandstede},
  title = {Stability of Travelling Waves},
  booktitle = {Handbook of Dynamical Systems}
}

@article{Guc2009,
  doi = {10.3934/dcdss.2009.2.851},
  url = {https://doi.org/10.3934/dcdss.2009.2.851},
  year = {2009},
  publisher = {American Institute of Mathematical Sciences ({AIMS})},
  volume = {2},
  number = {4},
  pages = {851--872},
  author = {John Guckenheimer and Christian Kuehn},
  title = {Homoclinic orbits of the {FitzHugh}-Nagumo equation: The singular-limit},
  journal = {Discrete {\&}amp$\mathsemicolon$ Continuous Dynamical Systems - S}
}

@article{Car1977,
  doi = {10.1016/0022-247x(77)90235-9},
  url = {https://doi.org/10.1016/0022-247x(77)90235-9},
  year = {1977},
  month = mar,
  publisher = {Elsevier {BV}},
  volume = {58},
  number = {1},
  pages = {152--173},
  author = {Gail A Carpenter},
  title = {Periodic solutions of nerve impulse equations},
  journal = {Journal of Mathematical Analysis and Applications}
}

@article{Den1991,
  doi = {10.1137/0522102},
  url = {https://doi.org/10.1137/0522102},
  year = {1991},
  month = sep,
  publisher = {Society for Industrial {\&} Applied Mathematics ({SIAM})},
  volume = {22},
  number = {6},
  pages = {1631--1650},
  author = {Bo Deng},
  title = {The Existence of Infinitely Many Traveling Front and Back Waves in the {FitzHugh}{\textendash}Nagumo Equations},
  journal = {{SIAM} Journal on Mathematical Analysis}
}

@article{Cha2007,
  doi = {10.1137/070682654},
  url = {https://doi.org/10.1137/070682654},
  year = {2007},
  month = jan,
  publisher = {Society for Industrial {\&} Applied Mathematics ({SIAM})},
  volume = {6},
  number = {4},
  pages = {663--693},
  author = {Alan R. Champneys and Vivien Kirk and Edgar Knobloch and Bart E. Oldeman and James Sneyd},
  title = {When Shil{\textquotesingle}nikov Meets Hopf in Excitable Systems},
  journal = {{SIAM} Journal on Applied Dynamical Systems}
}

@article{Guc2010,
  doi = {10.1137/090758404},
  url = {https://doi.org/10.1137/090758404},
  year = {2010},
  month = jan,
  publisher = {Society for Industrial {\&} Applied Mathematics ({SIAM})},
  volume = {9},
  number = {1},
  pages = {138--153},
  author = {John Guckenheimer and Christian Kuehn},
  title = {Homoclinic Orbits of the {FitzHugh}{\textendash}Nagumo Equation: Bifurcations in the Full System},
  journal = {{SIAM} Journal on Applied Dynamical Systems}
}

@article{Kru1997,
title = {Fast and Slow Waves in the FitzHugh–Nagumo Equation},
journal = {Journal of Differential Equations},
volume = {133},
number = {1},
pages = {49-97},
year = {1997},
issn = {0022-0396},
doi = {https://doi.org/10.1006/jdeq.1996.3198},
url = {https://www.sciencedirect.com/science/article/pii/S0022039696931988},
author = {Martin Krupa and Björn Sandstede and Peter Szmolyan},
}

@article{Has1976,
  doi = {10.1093/qmath/27.1.123},
  url = {https://doi.org/10.1093/qmath/27.1.123},
  year = {1976},
  publisher = {Oxford University Press ({OUP})},
  volume = {27},
  number = {1},
  pages = {123--134},
  author = {S. P. Hastings},
  title = {{On} {the} {existence} {of} {homoclinic} {and} {periodic} {orbits} {for} {the} {FitzHugh}-{Nagumo} {equations}},
  journal = {The Quarterly Journal of Mathematics}
}

@article{Maj2011,
  doi = {10.1371/journal.pone.0018052},
  url = {https://doi.org/10.1371/journal.pone.0018052},
  year = {2011},
  month = apr,
  publisher = {Public Library of Science ({PLoS})},
  volume = {6},
  number = {4},
  pages = {e18052},
  author = {Rupamanjari Majumder and Alok Ranjan Nayak and Rahul Pandit},
  editor = {Michael Breakspear},
  title = {Scroll-Wave Dynamics in Human Cardiac Tissue: Lessons from a Mathematical Model with Inhomogeneities and Fiber Architecture},
  journal = {{PLoS} {ONE}}
}

@article{Pra2015,
  doi = {10.1155/2015/389830},
  url = {https://doi.org/10.1155/2015/389830},
  year = {2015},
  publisher = {Hindawi Limited},
  volume = {2015},
  pages = {1--13},
  author = {Sergei Pravdin and Hans Dierckx and Vladimir S. Markhasin and Alexander V. Panfilov},
  title = {Drift of Scroll Wave Filaments in an Anisotropic Model of the Left Ventricle of the Human Heart},
  journal = {{BioMed} Research International}
}

@article{Prav2021,
  doi = {10.1103/physreve.103.042420},
  url = {https://doi.org/10.1103/physreve.103.042420},
  year = {2021},
  month = apr,
  publisher = {American Physical Society ({APS})},
  volume = {103},
  number = {4},
  author = {Sergei F. Pravdin and Timur V. Nezlobinsky and Alexander V. Panfilov and Hans Dierckx},
  title = {High-frequency pacing of scroll waves in a three-dimensional slab model of cardiac tissue},
  journal = {Physical Review E}
}

@article{Tra2014,
  doi = {10.1093/europace/eut330},
  url = {https://doi.org/10.1093/europace/eut330},
  year = {2014},
  month = may,
  publisher = {Oxford University Press ({OUP})},
  volume = {16},
  number = {5},
  pages = {705--713},
  author = {N. A. Trayanova and L. J. Rantner},
  title = {New insights into defibrillation of the heart from realistic simulation studies},
  journal = {Europace}
}

@article{Are2007,
  doi = {10.1063/1.2430637},
  url = {https://doi.org/10.1063/1.2430637},
  year = {2007},
  month = mar,
  publisher = {{AIP} Publishing},
  volume = {17},
  number = {1},
  pages = {015103},
  author = {Hermenegild Arevalo and Blanca Rodriguez and Natalia Trayanova},
  title = {Arrhythmogenesis in the heart: Multiscale modeling of the effects of defibrillation shocks and the role of electrophysiological heterogeneity},
  journal = {Chaos: An Interdisciplinary Journal of Nonlinear Science}
}

@article{Mul2018,
  doi = {10.1038/nrn.2018.20},
  url = {https://doi.org/10.1038/nrn.2018.20},
  year = {2018},
  month = mar,
  publisher = {Springer Science and Business Media {LLC}},
  volume = {19},
  number = {5},
  pages = {255--268},
  author = {Lyle Muller and Fr{\'{e}}d{\'{e}}ric Chavane and John Reynolds and Terrence J. Sejnowski},
  title = {Cortical travelling waves: mechanisms and computational principles},
  journal = {Nature Reviews Neuroscience}
}

@article{Mey2022,
  doi = {10.1117/1.nph.9.3.032209},
  url = {https://doi.org/10.1117/1.nph.9.3.032209},
  year = {2022},
  month = apr,
  publisher = {{SPIE}-Intl Soc Optical Eng},
  volume = {9},
  number = {03},
  author = {Lisa Meyer-Baese and Harrison Watters and Shella Keilholz},
  title = {Spatiotemporal patterns of spontaneous brain activity: a mini-review},
  journal = {Neurophotonics}
}

\end{document}